\long\def\symbolfootnote[#1]#2{\begingroup%
\def\thefootnote{\fnsymbol{footnote}}\footnote[#1]{#2}\endgroup}
\qed\vspace{5pt}}
\newtheoremstyle{lause}
{5pt}
{5pt}
{\slshape}
{\parindent}
{\bfseries}
{.}
{.5em}
{}
\theoremstyle{lause}
\newtheoremstyle{maaritelma}
{5pt}
{5pt}
{\rmfamily}
{\parindent}
{\bfseries}
{.}
{.5em}
{}
\theoremstyle{maaritelma}
\newtheoremstyle{lause}
{5pt}
{5pt}
{\slshape}
{\parindent}
{\bfseries}
{.}
{.5em}
{}
\theoremstyle{lause}
\newtheorem{theorem}{Theorem}[section]
\newtheorem{lemma}[theorem]{Lemma}
\newtheorem{corollary}[theorem]{Corollary}
\newtheoremstyle{maaritelma}
{5pt}
{5pt}
{\rmfamily}
{\parindent}
{\bfseries}
{.}
{.5em}
{}
\theoremstyle{maaritelma}
\newtheorem{definition}[theorem]{Definition}
\newtheorem{example}[theorem]{Example}
\newtheorem{remark}[theorem]{Remark}
\numberwithin{equation}{section}
\begin{document}

\thispagestyle{empty}

\begin{center}

{\large{\textbf{On Fuglede's problem on pseudo-balayage\\ for signed Radon measures of infinite energy}}}

\vspace{18pt}

\textbf{Natalia Zorii}

\vspace{18pt}

\emph{In memory of Bent Fuglede (1925-2023)}\vspace{8pt}

\footnotesize{\address{Institute of Mathematics, Academy of Sciences
of Ukraine, Tereshchenkivska~3, 02000, Kyiv, Ukraine\\
natalia.zorii@gmail.com }}

\end{center}

\vspace{12pt}

{\footnotesize{\textbf{Abstract.} For suitable kernels on a locally compact space, we develop a theory of inner (outer) pseudo-balayage of quite general signed Radon measures (not necessarily of finite energy) onto quite general sets (not necessarily closed). Such investigations were initiated in Fuglede's study (Anal.\ Math., 2016), which was, however, mainly concerned with the outer pse\-udo-bal\-ay\-age of positive measures of finite energy. The results thereby obtained solve Fuglede's problem, posed to the author in a private correspondence (2016), whether his theory could be extended to measures of infinite energy. An application of this theory to weighted minimum energy problems is also given.}}
\symbolfootnote[0]{\quad 2010 Mathematics Subject Classification: Primary 31C15.}
\symbolfootnote[0]{\quad Key words: Radon measures on a locally compact space; energy, consistency, and maximum principles; inner and outer pseudo-balayage.
}

\vspace{6pt}

\markboth{\emph{Natalia Zorii}} {\emph{On Fuglede's problem on pseudo-balayage for signed Radon measures of infinite energy}}

\section{Introduction and general conventions}\label{sec-intr} This paper deals with the theory of potentials on a locally compact (Hausdorff) space $X$ with respect to a kernel $\kappa$, {\it a kernel\/} being thought of as a symmetric, lower semi\-con\-tin\-uous (l.s.c.) function $\kappa:X\times X\to[0,\infty]$. In more details, for suitable $X$ and $\kappa$, we establish a theory of {\it inner} pse\-udo-bal\-ay\-age $\widehat{\omega}^A$ of quite a general {\it signed\/} Radon measure $\omega$ (not necessarily of finite energy) to quite a general set $A$ (not necessarily closed), where $\widehat{\omega}^A$ is introduced as the solution to the problem of minimizing the Gauss functional
\[\int\kappa(x,y)\,d(\mu\otimes\mu)(x,y)-2\int\kappa(x,y)\,d(\mu\otimes\omega)(x,y),\]
$\mu$ ranging over all positive measures of finite energy concentrated on the set $A$.

As a by-product, we provide a generalization of Fuglede's theory \cite{Fu5} of {\it outer} pse\-udo-bal\-ay\-age $\widehat{\omega}^{\,*A}$, which was mainly concerned with {\it positive} $\omega$ of {\it finite} energy. The results thereby obtained solve Fuglede's problem, posed to the author in a private correspondence, whether his theory could be extended to measures of infinite energy.

The concept of inner pse\-udo-bal\-ay\-age is further shown to be a powerful tool in the well-known inner Gauss variational problem, see Section~\ref{sec-appl} for the results obtained.

\subsection{General conventions}\label{sec-intr1} In what follows, a locally compact space $X$ is assumed to be second-countable. Then it is
{\it $\sigma$-compact} (that is, representable as a countable union of compact sets \cite[Section~I.9, Definition~5]{B1}), see \cite[Section~IX.2, Corollary to Proposition~16]{B3}; and hence
the concept of negligibility coincides with that of local negligibility, see \cite[Section~IV.5, Corollary~3 to Proposition~5]{B2}.

The reader is expected to be familiar with principal concepts of the theory of measures and integration on a locally compact space. For its exposition we refer to Bourbaki \cite{B2} or Edwards \cite{E2}; see also Fuglede \cite{F1} for a brief survey.

We denote by $\mathfrak M$ the linear space of all (real-valued Radon) measures $\mu$ on $X$, equipped with the {\it vague} (={\it weak\/$^*$}) topology of pointwise convergence on the class $C_0(X)$ of all continuous functions $\varphi:X\to\mathbb R$ of compact support, and by $\mathfrak M^+$ the cone of all positive $\mu\in\mathfrak M$, where $\mu$ is {\it positive} if and only if $\mu(\varphi)\geqslant0$ for all positive $\varphi\in C_0(X)$. The space $X$ being second-countable, each $\mu\in\mathfrak M$ has a countable base of vague neighborhoods \cite[Lemma~4.4]{Z-arx}, and hence any vaguely bounded subset of $\mathfrak M$ has a {\it sequence} that is vaguely convergent in $\mathfrak M$ (cf.\ \cite[Section~III.1, Proposition~15]{B2}).

Given $\mu,\nu\in\mathfrak M$, the {\it mutual energy} and the {\it potential} are introduced by
\begin{align*}
  I(\mu,\nu)&:=\int\kappa(x,y)\,d(\mu\otimes\nu)(x,y),\\
  U^\mu(x)&:=\int\kappa(x,y)\,d\mu(y),\quad x\in X,
\end{align*}
respectively, provided the value on the right is well defined as a finite number or $\pm\infty$. For $\mu=\nu$, the mutual energy $I(\mu,\nu)$ defines the {\it energy} $I(\mu,\mu)=:I(\mu)$ of $\mu\in\mathfrak M$.

Throughout this paper, a kernel $\kappa$ is assumed to satisfy the {\it energy principle}, or equivalently to be {\it strictly positive definite}, which means that $I(\mu)\geqslant0$ for all (signed) $\mu\in\mathfrak M$, and moreover that $I(\mu)=0$ only for $\mu=0$. Then all (signed) measures of finite energy form a pre-Hil\-bert space $\mathcal E$ with the inner product $\langle\mu,\nu\rangle:=I(\mu,\nu)$ and the energy norm $\|\mu\|:=\sqrt{I(\mu)}$, cf.\ \cite[Lemma~3.1.2]{F1}. The topology on $\mathcal E$ introduced by means of this norm is said to be {\it strong}.

Another permanent requirement on $\kappa$ is that it satisfies the {\it consistency} principle, which means that the cone
$\mathcal E^+:=\mathcal E\cap\mathfrak M^+$ is {\it complete} in the induced strong topology, and that the strong topology on $\mathcal E^+$ is {\it finer} than the vague topology on $\mathcal E^+$; such a kernel is said to be {\it perfect} (Fuglede \cite{F1}). Thus any strong Cauchy sequence (net) $(\mu_j)\subset\mathcal E^+$ converges {\it both strongly and vaguely} to the same unique measure $\mu_0\in\mathcal E^+$, the strong topology on $\mathcal E$ as well as the vague topology on $\mathfrak M$ being Hausdorff.

We shall sometimes also need the {\it domination} and {\it Ugaheri maximum principles}, where the former means that for any $\mu\in\mathcal E^+$ and $\nu\in\mathfrak M^+$ with $U^\mu\leqslant U^\nu$ $\mu$-a.e., the same inequality holds on all of $X$; whereas the latter means that there is $h\in[1,\infty)$, depending on $X$ and $\kappa$ only, such that for each $\mu\in\mathcal E^+$ with $U^\mu\leqslant c_\mu$ $\mu$-a.e., where $c_\mu\in(0,\infty)$, we have $U^\mu\leqslant hc_\mu$ on all of $X$. When the constant $h$ is specified, we speak of {\it $h$-Ugaheri's maximum principle}, and when $h=1$, $h$-Ugaheri's maximum principle is referred to as {\it Frostman's maximum principle}. See \cite[Section~1.2]{O}.

See Example~\ref{rem:clas} for kernels satisfying some/all of the above principles.

\begin{example}\label{rem:clas} (i) The $\alpha$-Riesz kernel $\kappa_\alpha(x,y):=|x-y|^{\alpha-n}$ of order $\alpha\in(0,2]$, $\alpha<n$, on $\mathbb R^n$, $n\geqslant2$ (thus in particular the Newtonian kernel $\kappa_2(x,y)$ on $\mathbb R^n$, $n\geqslant3$), is perfect, and it satisfies the domination and Frostman maximum principles. See \cite[Theorems~1.10, 1.15, 1.18, 1.27, 1.29]{L}.

(ii) The same holds true for the associated $\alpha$-Green kernel on an arbitrary open subset of $\mathbb R^n$, $n\geqslant2$. See \cite[Theorems~4.6, 4.9, 4.11]{FZ}.

(iii) The ($2$-)Green kernel on a planar Greenian set is likewise strictly positive definite \cite[Section~I.XIII.7]{Doob} and perfect \cite{E}, and it fulfills the domination and Frostman maximum principles (see \cite[Theorem~5.1.11]{AG} or \cite[Section~I.V.10]{Doob}).

(iv) The restriction of the logarithmic kernel $-\log\,|x-y|$ to a closed disc in $\mathbb R^2$ of radius ${}<1$ satisfies the energy and Frostman maximum principles \cite[Theorems~1.6, 1.16]{L}, and hence it is perfect \cite[Theorem~3.4.2]{F1}.\footnote{However, the domination principle then fails in general; it does hold only in a weaker sense where the measures $\mu,\nu$ involved in the ab\-ove-quo\-ted definition meet the additional requirement $\nu(\mathbb R^2)\leqslant\mu(\mathbb R^2)$, see \cite[Theorem~II.3.2]{ST}.}

(v) The $\alpha$-Riesz kernels of order $2<\alpha<n$ on $\mathbb R^n$, $n\geqslant2$, are likewise perfect, and satisfy $h$-Ugaheri's maximum principle with $h:=2^{n-\alpha}$, see \cite[Theorems~1.5, 1.15, 1.18]{L}.

(vi) The Deny kernels, defined with the aid of Fourier transformation (see $(A)$ in \cite[Section~1]{De2}, cf.\ \cite[Section~VI.1.2]{L}), are perfect as well.\end{example}

For the {\it inner} and {\it outer} capacities of a set $A\subset X$, denoted by $c_*(A)$ and $c^*(A)$, respectively, we refer to Fuglede \cite[Section~2.3]{F1}. If $A$ is capacitable (e.g.\ open or compact), we write $c(A):=c_*(A)=c^*(A)$. A proposition $\mathcal P(x)$ involving a variable point $x\in X$ is said to hold {\it quasi-everywhere} ({\it q.e.}) on $A$ if the set $N$ of all $x\in A$ where $\mathcal P(x)$ fails, is of outer capacity zero. Replacing here $c^*(N)=0$ by $c_*(N)=0$, we arrive at the concept of {\it nearly everywhere} ({\it n.e.}) on $A$. See \cite[p.~153]{F1}.

For any $A\subset X$, we denote by $\mathfrak C_A$ the upward directed set of all compact subsets $K$ of $A$, where $K_1\leqslant K_2$ if and only if $K_1\subset K_2$. If a net $(x_K)_{K\in\mathfrak C_A}\subset Y$ converges to $x_0\in Y$, $Y$ being a topological space, then we shall indicate this fact by writing
\begin{equation*}x_K\to x_0\text{ \ in $Y$ as $K\uparrow A$}.\end{equation*}

Given $A\subset X$, let $\mathfrak M^+(A)$ denote the set of all $\mu\in\mathfrak M^+$ {\it concentrated on} $A$, which means that $A^c:=X\setminus A$ is $\mu$-negligible, or equivalently that $A$ is $\mu$-mea\-s\-ur\-ab\-le and $\mu=\mu|_A$, $\mu|_A$ being the trace of $\mu$ to $A$, cf.\ \cite[Section~V.5.7]{B2}. (If a set $A$ is closed, then $\mu\in\mathfrak M^+(A)$ if and only if $S(\mu)\subset A$, where $S(\mu)$ denotes the support of $\mu$.)

Also define $\mathcal E^+(A):=\mathcal E\cap\mathfrak M^+(A)$. As seen from \cite[Lemma~2.3.1]{F1},
\begin{equation}\label{iff}
 c_*(A)=0\iff\mathcal E^+(A)=\{0\}\iff\mathcal E^+(K)=\{0\}\quad\text{for every $K\in\mathfrak C_A$}.
\end{equation}

In what follows, let a set $A\subset X$ be fixed. To avoid trivialities, suppose that
\begin{equation}\label{non0}
 c_*(A)>0.
\end{equation}
While approximating $A$ by $K\in\mathfrak C_A$, we may therefore only consider $K$ with $c(K)>0$.

Yet another permanent condition imposed on a set $A$ is that the class $\mathcal E^+(A)$ is {\it closed\/} in the induced strong topology.\footnote{As shown in \cite[Theorem~2.13]{Z-Oh}, this occurs, for instance, if $A$ is {\it quasiclosed} ({\it quasicompact\/}), that is, if $A$ can be approximated in outer capacity by closed (compact) sets (Fuglede \cite[Definition~2.1]{F71}). We note that, in general, a quasiclosed set is not Borel measurable (Fuglede, private correspondence).} Being, therefore, a strongly closed subcone of the strongly complete cone $\mathcal E^+$, the cone $\mathcal E^+(A)$ is likewise {\it strongly complete}.

Also fix $\omega\in\mathfrak M$, $\omega\ne0$. Unless $\omega\in\mathcal E$, assume that (a)--(c) are fulfilled, where:
\begin{itemize}
  \item[(a)] {\it $\omega^+$ is bounded, i.e.\ $\omega^+(X)<\infty$.}\footnote{$\omega^+$ and $\omega^-$ denote respectively the positive and negative parts of $\omega$ in the Hahn--Jor\-dan decomposition, see \cite[Section~III.1, Theorem~2]{B2}. We also write $|\omega|:=\omega^++\omega^-$.}
 \item[(b)] {\it For every $K\in\mathfrak C_A$, $U^{\omega^+}\bigl|_K$ is upper semicontinuous} ({\it u.s.c.}), {\it hence continuous;  and moreover}
     \begin{equation}\label{MA}
      M_A:=\sup_{x\in A}\,U^{|\omega|}(x)<\infty.
     \end{equation}
  \item[(c)] {\it $\kappa$ satisfies $h$-Ugaheri's maximum principle.}
\end{itemize}

The above-mentioned assumptions on a locally compact space $X$, a kernel $\kappa$, a set $A$, and a measure $\omega$ will usually not be repeated henceforth.

Note that, if $\omega\in\mathcal E$, then the only requirement imposed on $\kappa$ is its perfectness. Being actually investigated in the author's recent paper \cite{Z-Expo}, the case where $\omega\in\mathcal E$ is incorporated in the present study only for the sake of completeness.

\section{Inner and outer pseudo-balayage}\label{sec-intr2} It is often convenient to treat the above measure $\omega\in\mathfrak M$ as a charge creating the {\it external field}
\begin{equation*}
f:=-U^\omega.
\end{equation*}
If $\omega\in\mathcal E$, then $f$ is well defined and finite q.e.\ (hence, n.e.) on $X$, see \cite[Corollary to Lemma~3.2.3]{F1}, while otherwise $|f|$ is well defined and bounded on $A$, cf.\ (\ref{MA}).

We denote by $\mathcal E^+_f(A)$ the class of all $\mu\in\mathcal E^+(A)$ such that the external field $f$ is $\mu$-integrable, i.e.\ $f\in\mathcal L^1(\mu)$ (see \cite[Chapter~IV, Sections~3, 4]{B2}), and define
\begin{equation}\label{W}
\widehat{w}_f(A):=\inf_{\mu\in\mathcal E^+_f(A)}\,I_f(\mu),
\end{equation}
where $I_f(\mu)$ is the so-called {\it Gauss functional},\footnote{For the terminology used here, see e.g.\ \cite{L,O}. In constructive function theory, $I_f(\mu)$ is often
referred to as {\it the $f$-weighted energy}, see e.g.\ \cite{BHS,ST}.} introduced by means of the formula
\begin{equation}\label{If}
I_f(\mu):=\|\mu\|^2+2\int f\,d\mu=\|\mu\|^2-2\int U^\omega\,d\mu\in(-\infty,\infty).
\end{equation}
Then
\begin{equation}\label{West}-\infty\leqslant\widehat{w}_f(A)\leqslant0,\end{equation}
the upper estimate being caused by the fact that $0\in\mathcal E^+_f(A)$, while $I_f(0)=0$.

Observe that in the case $\omega\in\mathcal E$, we actually have
\begin{equation}\label{Ef}
\mathcal E^+_f(A)=\mathcal E^+(A),
\end{equation}
which is clear from the Cauchy--Schwarz (Bunyakovski) inequality, applied to $\omega\in\mathcal E$ and $\mu\in\mathcal E^+(A)$, while otherwise $\mathcal E^+_f(A)$ includes all {\it bounded} $\mu\in\mathcal E^+(A)$, the latter being obvious from (\ref{MA}) by making use of \cite[Section~IV.3, Corollary~2 to Theorem~4]{B2}. The same Corollary from \cite{B2} also implies that $\mathcal E^+_f(A)$ is a {\it convex cone}.

\begin{theorem}\label{th1}
There is one and the same measure $\widehat{\omega}^A\in\mathcal E^+_f(A)$, called the inner pseudo-balayage of $\omega$ onto $A$, that satisfies any one of the following three assertions.
\begin{itemize}
  \item[{\rm(i$_1$)}] $\widehat{\omega}^A$ is the unique solution to problem {\rm(\ref{W})}, that is, $\widehat{\omega}^A\in\mathcal E^+_f(A)$ and
      \begin{equation}\label{e-i}
       I_f(\widehat{\omega}^A)=\min_{\mu\in\mathcal E^+_f(A)}\,I_f(\mu)=\widehat{w}_f(A)\in(-\infty,0].
      \end{equation}
  \item[{\rm(ii$_1$)}] $\widehat{\omega}^A$ is the only measure in $\mathcal E^+_f(A)$ having the two properties
 \begin{align}\label{def1'}
&\int U^{\widehat{\omega}^A-\omega}\,d\mu\geqslant0\quad\text{for all $\mu\in\mathcal E^+_f(A)$},\\
&\int U^{\widehat{\omega}^A-\omega}\,d\widehat{\omega}^A=0.\label{def2'}
\end{align}
\item[{\rm(iii$_1$)}] $\widehat{\omega}^A$ is the only measure in $\mathcal E^+_f(A)$ having the two properties\footnote{In view of (\ref{def1}), $U^{\widehat{\omega}^A}\geqslant U^\omega$ $\widehat{\omega}^A$-a.e.\ (Lemma~\ref{l1}); hence, (\ref{def2}) can be replaced by the
    apparently weaker relation $U^{\widehat{\omega}^A}\leqslant U^\omega$ $\widehat{\omega}^A$-a.e. Similarly, (\ref{def2'}) can be replaced by $\int U^{\widehat{\omega}^A-\omega}\,d\widehat{\omega}^A\leqslant0$.}
  \begin{align}\label{def1}
  U^{\widehat{\omega}^A}&\geqslant U^\omega\quad\text{n.e.\ on $A$},\\
  U^{\widehat{\omega}^A}&=U^\omega\quad\text{$\widehat{\omega}^A$-a.e.}\label{def2}
  \end{align}
\end{itemize}
\end{theorem}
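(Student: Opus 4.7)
The plan is to prove (i$_1$) by a Hilbert-space minimization argument, then obtain (ii$_1$) by first-order variational inequalities, and finally derive (iii$_1$) by a localisation step, closing with a direct verification (ii$_1$)$\Leftrightarrow$(iii$_1$). Uniqueness in each characterisation follows from strict positive-definiteness of $\kappa$.

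For existence in (i$_1$), the first step is to verify $\widehat{w}_f(A)>-\infty$. When $\omega\in\mathcal E$, Cauchy--Schwarz in $\mathcal E$ gives $I_f(\mu)\geqslant\|\mu\|^2-2\|\omega\|\|\mu\|\geqslant-\|\omega\|^2$. When $\omega\notin\mathcal E$, hypotheses (a)--(c) must be exploited jointly: the crude bound $\int U^\omega\,d\mu\leqslant\int U^{|\omega|}\,d\mu\leqslant M_A\mu(X)$ is to be combined, through the Fubini identity $\int U^{\omega^+}\,d\mu=\int U^\mu\,d\omega^+$ and the $h$-Ugaheri maximum principle applied to $U^\mu$, with the boundedness of $\omega^+$ from (a) to produce a sublinear coercivity estimate for $I_f$. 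Granted $\widehat{w}_f(A)\in(-\infty,0]$, the parallelogram identity
\begin{equation*}
\Bigl\|\tfrac{\mu_m-\mu_n}{2}\Bigr\|^2=\tfrac{I_f(\mu_m)+I_f(\mu_n)}{2}-I_f\Bigl(\tfrac{\mu_m+\mu_n}{2}\Bigr),
\end{equation*}
together with the convexity of $\mathcal E^+_f(A)$ noted in the excerpt, shows any minimising sequence is strongly Cauchy. Strong completeness of $\mathcal E^+(A)$ and perfectness of $\kappa$ then provide a joint strong-and-vague limit $\widehat{\omega}^A\in\mathcal E^+(A)$. Strong continuity of $\mu\mapsto\|\mu\|^2$ is immediate, and the linear term $\int U^\omega\,d\mu_n$ passes to its limit via $\langle\omega,\mu_n\rangle$ when $\omega\in\mathcal E$, and otherwise via vague convergence combined with (b) (continuity of $U^{\omega^+}|_K$ on each $K\in\mathfrak C_A$ together with $U^{|\omega|}\leqslant M_A$ on $A$); this also secures $\widehat{\omega}^A\in\mathcal E^+_f(A)$.

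Testing the admissible perturbation $\widehat{\omega}^A+t\mu\in\mathcal E^+_f(A)$ for $\mu\in\mathcal E^+_f(A)$ and $t>0$ gives
\begin{equation*}
0\leqslant I_f(\widehat{\omega}^A+t\mu)-I_f(\widehat{\omega}^A)=2t\int U^{\widehat{\omega}^A-\omega}\,d\mu+t^2\|\mu\|^2;
\end{equation*}
dividing by $t$ and letting $t\downarrow0$ yields (\ref{def1'}), while scaling $s\widehat{\omega}^A$ at $s=1$ (where the derivative of $s\mapsto I_f(s\widehat{\omega}^A)$ must vanish) gives (\ref{def2'}). Conversely, for any $\nu\in\mathcal E^+_f(A)$ satisfying (\ref{def1'})--(\ref{def2'}), the algebraic identity
\begin{equation*}
I_f(\mu)-I_f(\nu)=\|\mu-\nu\|^2+2\int U^{\nu-\omega}\,d\mu-2\int U^{\nu-\omega}\,d\nu\geqslant\|\mu-\nu\|^2\geqslant0
\end{equation*}
shows $\nu$ is the unique minimiser, giving (i$_1$)$\Leftrightarrow$(ii$_1$). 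For (iii$_1$)$\Rightarrow$(ii$_1$), integrate (\ref{def1}) against any $\mu\in\mathcal E^+_f(A)$ (permissible since such $\mu\in\mathcal E^+$ ignores inner-capacity-zero sets) and (\ref{def2}) against $\widehat{\omega}^A$. For the converse, if $\{x\in A:U^{\widehat{\omega}^A}<U^\omega\}$ had positive inner capacity, one would extract a compact subset on which $U^\omega-U^{\widehat{\omega}^A}\geqslant\varepsilon>0$ and test (\ref{def1'}) against its equilibrium measure, obtaining a contradiction; this establishes (\ref{def1}), and then (\ref{def2}) follows from (\ref{def2'}) combined with (\ref{def1}), which forces $U^{\widehat{\omega}^A-\omega}\geqslant0$ $\widehat{\omega}^A$-a.e.

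The main obstacle is the coercivity step $\widehat{w}_f(A)>-\infty$ when $\omega\notin\mathcal E$: the usual Cauchy--Schwarz bound is unavailable, and the three assumptions (a)--(c) must interact nontrivially, with the $h$-Ugaheri maximum principle providing the essential mechanism for converting the mass bound $M_A\mu(X)$ into a sublinear function of $\|\mu\|$. A subsidiary difficulty is the strong-to-vague passage in the linear term of $I_f$ along the minimising sequence when $\omega\notin\mathcal E$, for which the continuity of $U^{\omega^+}$ on compact subsets of $A$ furnished by (b) is precisely what is needed.
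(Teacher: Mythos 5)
Your treatment of the uniqueness statements and of the equivalences (i$_1$)$\Leftrightarrow$(ii$_1$)$\Leftrightarrow$(iii$_1$) is essentially the paper's argument and is sound. The genuine gap is in the existence step, precisely at the point you yourself flag as "the main obstacle": the claimed coercivity estimate $\widehat{w}_f(A)>-\infty$ obtained by applying the $h$-Ugaheri maximum principle to $U^\mu$ for an \emph{arbitrary} competitor $\mu\in\mathcal E^+_f(A)$ does not work. The $h$-Ugaheri principle requires an a priori bound $U^\mu\leqslant c_\mu$ $\mu$-a.e.\ as a hypothesis, and a generic competitor has no such bound; likewise, when $c_*(A)=\infty$ there is no inequality of the form $\mu(X)\leqslant{\rm const}\cdot\|\mu\|$, so the bound $\int U^{\omega^+}\,d\mu\leqslant M_A\,\mu(X)$ cannot be converted into anything sublinear in $\|\mu\|$. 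Without a finite lower bound for $\widehat{w}_f(A)$ your parallelogram argument for the minimizing sequence never gets started. The paper avoids a direct coercivity estimate on $A$ altogether: it first solves the problem on each compact $K\subset A$ (where $c(K)<\infty$ gives $\tau(K)^2\leqslant c(K)\|\tau\|^2$ and hence genuine coercivity, cf.\ (\ref{est})), then applies $h$-Ugaheri \emph{to the capacitary measure of the support of the compact solution} $\widehat{\omega}^K$ — a measure which does satisfy $U^{\gamma_{\mathfrak S}}\leqslant1$ on its support — to obtain the uniform mass bound $\widehat{\omega}^K(X)\leqslant h\omega^+(X)$ of (\ref{La}), whence the uniform lower bound (\ref{KA}) and the strong Cauchy property of the net $(\widehat{\omega}^K)_{K\in\mathfrak C_A}$. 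Finiteness of $\widehat{w}_f(A)$ is an output of this exhaustion, not an input.

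A second, related gap is the passage to the limit in the linear term $\int U^\omega\,d\mu_n$ along your minimizing sequence when $\omega\notin\mathcal E$ and $A$ is noncompact. Vague convergence plus continuity of $U^{\omega^+}$ on compact subsets of $A$ (hypothesis (b)) does not suffice: $U^{\omega^+}$ need not extend to a $C_0(X)$ test function, and no decay at infinity is assumed in Theorem \ref{th1} (conditions (d)--(f) enter only later, in Section \ref{sec-appl}). The paper sidesteps this by never proving $I_f(\zeta)=\lim I_f(\widehat{\omega}^K)$ directly; instead it verifies the characterizing properties (\ref{def1})--(\ref{def2}) for the strong limit $\zeta$, using Lemma \ref{l2} (strong convergence gives pointwise n.e.\ convergence of potentials along a subsequence) for (\ref{def1}), and a vague-support argument combined with the upper semicontinuity of $U^\omega$ on compact subsets of $A$ for (\ref{def2}); existence and minimality of $\zeta$ then follow from the already-established equivalence with (i$_1$). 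You would need to replicate this indirect route (or supply a genuinely new argument) to close the existence proof.
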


\begin{remark}\label{rem1} Assume for a moment that the kernel $\kappa$ satisfies the domination principle. If moreover $\omega$ is {\it positive}, then, by virtue of (\ref{def1}) and (\ref{def2}),
\begin{align}
U^{\widehat{\omega}^A}&=U^\omega\quad\text{n.e.\ on $A$},\label{ineq1}\\
U^{\widehat{\omega}^A}&\leqslant U^\omega\quad\text{on $X$},\notag
\end{align}
so that $\widehat{\omega}^A$ serves as the {\it inner balayage} $\omega^A$ of $\omega$ onto $A$ (cf.\ \cite{Z-arx1,Z-Fin}). In other words, for positive $\omega$, the concept of inner pseudo-balayage, introduced by means of Theorem~\ref{th1}, represents a natural extension of the concept of inner balayage to kernels that do not satisfy the domination principle. However, {\it this is no longer so if $\omega$ is signed}, for the inner balayage of a signed measure is usually defined by linearity:
\[\omega^A=(\omega^+)^A-(\omega^-)^A,\]
and hence it must likewise be signed. Thus, for signed measures $\omega$ and kernels $\kappa$ satisfying the domination principle, the theory of inner pseudo-balayage provides an alternative approach to inner balayage that is {\it not} equivalent to the classical one. For further illustrations of this non-equ\-iv\-al\-ence, see Remarks~\ref{rem2} and \ref{rem3} below.
\end{remark}

\begin{remark}\label{rem2} It is obvious from Theorem~\ref{th1} that for any $q\in(0,\infty)$,
\[\widehat{q\omega}^A=q\widehat{\omega}^A.\]
However, this fails to hold whenever $q\in(-\infty,0)$. In fact, if $\omega=-\omega^-\ne0$,
then
\[\widehat{\omega}^A=\widehat{(-\omega^-)}^A=0,\]
because $\widehat{w}_f(A)=0=I_f(0)$, the former equality being clear from $0\in\mathcal E^+_f(A)$ and
\[I_f(\mu)=\|\mu\|^2+2\int U^{\omega^-}\,d\mu\geqslant0\quad\text{for all $\mu\in\mathcal E^+_f(A)$}.\]
\end{remark}

\begin{remark}\label{rem3} In general, equality does not prevail in (\ref{def1}) (as it does for the inner balayage, cf.\ (\ref{ineq1})), which is seen by taking $\omega=-\omega^-\ne0$, cf.\ Remark~\ref{rem2}.
\end{remark}

\begin{theorem}\label{th1'}If $A$ is Borel, then Theorem~{\rm\ref{th1}} remains valid with "n.e.\ on $A$" replaced by "q.e.\ on $A$". The measure $\widehat{\omega}^{\,*A}$, thereby uniquely determined, is said to be the outer pseudo-balayage of $\omega$ onto $A$. Actually,
\begin{equation}\label{io}
\widehat{\omega}^{\,*A}=\widehat{\omega}^A.
\end{equation}
\end{theorem}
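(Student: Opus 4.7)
The plan is to show that the measure $\widehat{\omega}^A$ already produced by Theorem~\ref{th1} satisfies the apparently stronger "q.e." form of (\ref{def1}) as soon as $A$ is Borel, and then to deduce (\ref{io}) and the q.e.\ analogues of (i$_1$)--(iii$_1$) by a direct appeal to the uniqueness in Theorem~\ref{th1}. The starting observation is that "q.e.\ on $A$" is strictly stronger than "n.e.\ on $A$" (since $c_*(\cdot)\le c^*(\cdot)$); therefore any $\mu\in\mathcal E^+_f(A)$ satisfying the q.e.\ version of (iii$_1$) automatically satisfies the n.e.\ version and must equal $\widehat{\omega}^A$ by Theorem~\ref{th1}. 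Uniqueness of $\widehat{\omega}^{\,*A}$ and the identity (\ref{io}) will then be immediate, provided existence is established. The formulations (i$_1$) and (ii$_1$) involve no exceptional sets, so they will transfer verbatim once (\ref{io}) is proved.

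The core of the argument is thus the existence step: upgrading the relation $U^{\widehat{\omega}^A}\ge U^\omega$ from n.e.\ to q.e.\ on $A$. I would introduce the exceptional set
\[
N:=\bigl\{x\in A\colon U^{\widehat{\omega}^A}(x)<U^\omega(x)\bigr\}
\]
and verify that it is Borel. Assumption (\ref{MA}) ensures that $U^{\omega^+}(x)$ and $U^{\omega^-}(x)$ are both finite (and bounded by $M_A$) for every $x\in A$, so $U^\omega=U^{\omega^+}-U^{\omega^-}$ is an unambiguously defined, real-valued Borel function on $A$; meanwhile $U^{\widehat{\omega}^A}$ is l.s.c.\ on $X$ and hence Borel. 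If $A$ is Borel, so is $N$.

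Since Borel sets are capacitable in Fuglede's framework (Choquet's theorem, cf.\ \cite[Section~2.3]{F1}), one has $c^*(N)=c_*(N)$. Because Theorem~\ref{th1}(iii$_1$) already yields $c_*(N)=0$, this forces $c^*(N)=0$, which is precisely the q.e.\ form of (\ref{def1}). Together with the unchanged relation (\ref{def2}), this shows that $\widehat{\omega}^A$ satisfies the q.e.\ version of (iii$_1$); the reduction in the first paragraph then delivers $\widehat{\omega}^{\,*A}=\widehat{\omega}^A$ and the remaining assertions.

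The one point where care is required -- and the only reason the Borel hypothesis on $A$ enters -- is the Borelness of $N$, which in turn depends on making sense of the difference $U^{\omega^+}-U^{\omega^-}$ as a Borel function; hypothesis (\ref{MA}) removes the $\infty-\infty$ ambiguity on $A$, and the Borelness of $A$ then makes $N$ Borel in $X$, opening the door to capacitability. Without Borelness of $A$, the argument produces only inner capacity zero, which is exactly the gap between n.e.\ and q.e.
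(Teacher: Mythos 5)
Your argument is correct. Note that the paper gives no self-contained proof of this theorem---it simply states that the argument runs as in \cite[Theorem~2.5]{Z-Expo}, with Theorem~\ref{th1} substituted for the earlier result---so your write-up supplies exactly the details the paper delegates to that reference, and it follows the natural (and almost certainly the intended) route: since ``q.e.\ on $A$'' implies ``n.e.\ on $A$'' (as $c_*\leqslant c^*$), uniqueness and the equivalence with (i$_1$), (ii$_1$) transfer immediately from Theorem~\ref{th1}, and the whole burden falls on upgrading (\ref{def1}) from n.e.\ to q.e., which you do via Borel measurability of the exceptional set $N$ together with Fuglede's capacitability theorem for $K$-analytic (hence, in a second-countable space, Borel) sets under a consistent kernel; this is precisely where the Borel hypothesis on $A$ is used, and the identity (\ref{io}) drops out. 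One small point: your measurability discussion of $N$ leans on (\ref{MA}), which is assumed only when $\omega\notin\mathcal E$. In the remaining case $\omega\in\mathcal E$, the potential $U^\omega$ is well defined merely quasi-everywhere, so $N$ should be taken as the (still Borel) set of points of $A$ where $U^\omega$ is defined and the inequality fails, while the residual Borel set where $U^{\omega^+}=U^{\omega^-}=\infty$, already of outer capacity zero, is absorbed by the subadditivity of $c^*$. This is cosmetic and does not affect the validity of your proof.
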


\begin{remark}It follows that for Borel $A$, Remarks~\ref{rem1}--\ref{rem3} remain valid with $\widehat{\omega}^A$, $\omega^A$, and "n.e.\ on $A$" replaced by $\widehat{\omega}^{\,*A}$, $\omega^{\,*A}$, and "q.e.\ on $A$", respectively, where $\omega^{\,*A}$ denotes the {\it outer} balayage of $\omega$ onto $A$ (cf.\ \cite[Theorem~9.4]{Z-arx1}, \cite[Theorem~1.5]{Z-Fin}).\end{remark}

\begin{remark}If $A$ is quasiclosed and $\omega\in\mathcal E^+$, then the existence of the outer pseu\-do-bal\-ayage $\widehat{\omega}^{\,*A}$ was established by Fuglede \cite[Theorem~4.10]{Fu5}. The methods exploited in the present work are substantially different from those in \cite{Fu5}, which enables us to generalize Fuglede's result to {\it signed} $\omega$ of {\it infinite} energy (Theorem~\ref{th1'}) as well as to develop the theory of inner pseudo-balayage $\widehat{\omega}^A$ (Theorem~\ref{th1}). In addition, we also study the strong and the vague continuity of $\widehat{\omega}^A$ and $\widehat{\omega}^{\,*A}$ under approximation of $A$ by monotone families of sets (Theorems~\ref{conv=ps} and \ref{conv=ps'}).\end{remark}

\section{Proofs of Theorems~\ref{th1} and \ref{th1'}}

We quote for future reference some known facts, useful in the sequel. For any $\nu\in\mathfrak M^+$, let $\nu^*(E)$, resp.\ $\nu_*(E)$, stand for the {\it outer}, resp.\ {\it inner}, measure of $E\subset X$. A subset of $X$ is said to be {\it universally measurable} if it is $\nu$-meas\-ur\-able for every $\nu\in\mathfrak M^+$, see \cite[Section~V.3.4]{B2}.

\begin{lemma}\label{l1}
For any $E\subset X$, any $\mu\in\mathcal E^+(E)$, and any universally measurable $U\subset X$ such that $c_*(E\cap U)=0$, we have $\mu^*(E\cap U)=0$.
\end{lemma}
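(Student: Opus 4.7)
The plan is to reduce the claim to the compact case via inner regularity and then apply the already-stated capacity criterion (1.1).

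First I would use the definition of "concentrated on" recalled in Section~\ref{sec-intr1}: since $\mu \in \mathcal{E}^+(E)$, the set $E$ is $\mu$-measurable, and by hypothesis the universally measurable set $U$ is also $\mu$-measurable. Hence $E\cap U$ is $\mu$-measurable, and consequently $\mu^*(E\cap U)=\mu(E\cap U)$. It therefore suffices to prove that $\mu(E\cap U)=0$.

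Next I would appeal to inner regularity of the Radon measure $\mu$: for the $\mu$-measurable set $E\cap U$,
\[
 \mu(E\cap U)=\sup\bigl\{\mu(K):K\in\mathfrak C_{E\cap U}\bigr\}.
\]
Fix such a compact $K\subset E\cap U$. Because $\kappa\geqslant0$, the trace $\mu|_K$ satisfies $\mu=\mu|_K+\mu|_{X\setminus K}$ with both summands positive, whence the decomposition of $I(\mu)$ into three nonnegative pieces yields $I(\mu|_K)\leqslant I(\mu)<\infty$. Moreover $\mu|_K$ is (trivially) concentrated on $K$, so $\mu|_K\in\mathcal E^+(K)$.

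Finally, since $K\subset E\cap U$ and capacity is monotone, $c(K)\leqslant c_*(E\cap U)=0$. The equivalences (\ref{iff}) then force $\mathcal E^+(K)=\{0\}$, so $\mu|_K=0$ and in particular $\mu(K)=\mu|_K(K)=0$. Taking the supremum over $K\in\mathfrak C_{E\cap U}$ gives $\mu(E\cap U)=0$, as required. I do not foresee a serious obstacle here; the only subtle point is the initial observation that "$\mu$ is concentrated on $E$" is strong enough to make $E$ itself $\mu$-measurable (so that $\mu^*(E\cap U)=\mu(E\cap U)$), rather than merely saying $E^c$ is $\mu$-negligible in an outer-measure sense.
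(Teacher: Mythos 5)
Your argument is correct and is essentially the paper's own proof, just with the details spelled out: the paper likewise observes that $E\cap U$ is $\mu$-measurable (intersection of the $\mu$-measurable $E$ with the universally measurable $U$), reduces via $\sigma$-compactness to showing $\mu_*(E\cap U)=0$, and gets this from (\ref{iff}) applied to compact subsets of $E\cap U$. Your explicit verification that $\mu|_K\in\mathcal E^+(K)$ (using $\kappa\geqslant0$) is exactly the step the paper leaves implicit.
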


\begin{proof}
Being the intersection of universally measurable $U$ and $\mu$-measurable $E$, the set $E\cap U$ is $\mu$-measurable. Since the space $X$ is $\sigma$-compact, it is therefore enough to show that $\mu_*(E\cap U)=0$, which is however obvious from (\ref{iff}).
\end{proof}

\begin{lemma}\label{str}
For any $E\subset X$ and any universally measurable $U_j$, $j\in\mathbb N$,
\[c_*\Bigl(\bigcup_{j\in\mathbb N}\,E\cap U_j\Bigr)\leqslant\sum_{j\in\mathbb N}\,c_*(E\cap U_j).\]\end{lemma}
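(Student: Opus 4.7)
The plan is to combine a measure-theoretic disjointification of the $U_j$'s with the variational characterization of inner capacity, and then to apply the Cauchy--Schwarz inequality. First, I would set $V_1:=U_1$ and $V_j:=U_j\setminus\bigcup_{k<j}U_k$ for $j\geqslant2$, so that the $V_j$ are pairwise disjoint and universally measurable, $V_j\subset U_j$, and $\bigcup_jV_j=\bigcup_jU_j$. In particular, $A:=\bigcup_jE\cap U_j$ decomposes as the disjoint union $\bigsqcup_jE\cap V_j$, and a quick check yields $A\cap V_j=E\cap V_j\subset E\cap U_j$ (any point $x\in V_j\cap A$ must lie in $E\cap U_l$ for some $l\geqslant j$, hence in $E$).

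Next I would invoke the standard identity $c_*(A)=\sup_{K\in\mathfrak C_A}c(K)$ together with the variational characterization
\[c(K)=\sup\bigl\{\mu(K)^2/I(\mu):\mu\in\mathcal E^+(K),\,\mu\ne0\bigr\}\]
for compact $K$, valid in Fuglede's setting \cite[Section~2]{F1}. Thus it is enough to show, for every compact $K\subset A$ and every $0\ne\mu\in\mathcal E^+(K)$, that $\mu(K)^2/I(\mu)\leqslant\sum_jc_*(E\cap U_j)$.

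To this end, decompose $\mu$ along the $V_j$'s. Each $V_j$ is $\mu$-measurable (being universally measurable), so the trace $\mu_j:=\mu|_{V_j}$ is a positive Radon measure dominated by $\mu$, hence of finite energy. Since $\mu$ is concentrated on $A$ and $A\cap V_j=E\cap V_j\subset E\cap U_j$, the measure $\mu_j$ is concentrated on $E\cap U_j$, so $\mu_j\in\mathcal E^+(E\cap U_j)$ and $\mu_j(X)^2/I(\mu_j)\leqslant c_*(E\cap U_j)$ whenever $\mu_j\ne0$. Countable additivity gives $\mu(K)=\sum_j\mu_j(X)$, while the non-negativity of $\kappa$ gives $I(\mu)=\sum_{j,k}I(\mu_j,\mu_k)\geqslant\sum_jI(\mu_j)$. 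The Cauchy--Schwarz inequality then yields
\[\mu(K)=\sum_{j:\,\mu_j\ne0}\frac{\mu_j(X)}{I(\mu_j)^{1/2}}\,I(\mu_j)^{1/2}\leqslant\Bigl(\sum_jc_*(E\cap U_j)\Bigr)^{1/2}\Bigl(\sum_jI(\mu_j)\Bigr)^{1/2}\leqslant\Bigl(\sum_jc_*(E\cap U_j)\Bigr)^{1/2}I(\mu)^{1/2},\]
which, upon squaring and dividing by $I(\mu)$, is precisely the required estimate; taking suprema over $\mu$ and then over $K\in\mathfrak C_A$ completes the proof.

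The only real (and mild) obstacle is verifying that $\mu_j\in\mathcal E^+(E\cap U_j)$; this hinges on the identity $A\cap V_j=E\cap V_j$, which holds precisely because the disjointification excises the lower-indexed $U_k$'s and forces any point of $V_j\cap A$ to lie in $E$ itself, rather than merely in some $E\cap U_k$ with $k\ne j$. Everything else is routine bookkeeping, granted the standard variational characterization of capacity.
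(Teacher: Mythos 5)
Your argument is correct. Note, however, that the paper does not actually prove this lemma from scratch: it merely observes that a strictly positive definite kernel is pseudo-positive and then cites Fuglede \cite[Lemma~2.3.5 and the remark following it]{F1} (with a nod to Cartan for the Newtonian case). What you have written is, in substance, a self-contained reconstruction of the classical Cartan--Fuglede countable-subadditivity argument that underlies that citation: disjointify the $U_j$, split a measure $\mu\in\mathcal E^+(K)$ along the traces $\mu|_{V_j}$, use positivity of the kernel to drop the cross terms in $I(\mu)=\sum_{j,k}I(\mu_j,\mu_k)$, and conclude by Cauchy--Schwarz against the variational characterization $c(K)=\sup\,\mu(K)^2/I(\mu)$. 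The two steps you label as routine do deserve a word: (i) the bound $\mu_j(X)^2/I(\mu_j)\leqslant c_*(E\cap U_j)$ for the \emph{non-compact} set $E\cap U_j$ follows from inner regularity of $\mu_j$ (exhaust $E\cap U_j$ by compacts $K'$, note $I(\mu_j|_{K'})\leqslant I(\mu_j)$ since $\kappa\geqslant0$, and pass to the supremum), essentially \cite[Lemma~2.3.1]{F1}; and (ii) the division by $I(\mu_j)^{1/2}$ is legitimate because the energy principle forces $I(\mu_j)>0$ whenever $\mu_j\ne0$. With those remarks supplied, your proof is complete; its advantage over the paper's is self-containedness, while the paper's citation buys brevity and places the statement in the slightly more general pseudo-positive setting of \cite{F1}.
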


\begin{proof}Since a strictly positive definite kernel is pseudo-positive, cf.\ \cite[p.~150]{F1}, the lemma follows directly from Fuglede \cite{F1} (see Lemma~2.3.5 and the remark after it). For the Newtonian kernel on $\mathbb R^n$, this goes back to Cartan \cite[p.~253]{Ca2}.\end{proof}

\begin{lemma}\label{l2}If a sequence $(\nu_j)\subset\mathcal E$ converges strongly to $\nu_0$, then there exists a subsequence $(\nu_{j_k})$ whose potentials converge to $U^{\nu_0}$ pointwise n.e.\ on $X$.\end{lemma}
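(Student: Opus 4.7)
The plan is to extract a subsequence $(\nu_{j_k})$ along which $\|\nu_{j_k}-\nu_0\|$ decays geometrically, obtain a Cartan--Deny-type capacity bound for the level sets of the potentials of the differences $\sigma_k:=\nu_{j_k}-\nu_0$, and combine the two via Borel--Cantelli using the countable subadditivity supplied by Lemma~\ref{str}.

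Concretely, strong convergence lets one pick $j_k$ with $\|\sigma_k\|\leqslant 2^{-k}$. Since $I(|\sigma_k|)<\infty$, both $\sigma_k^\pm$ lie in $\mathcal E^+$, so the l.s.c.\ potentials $U^{\sigma_k^\pm}$ are finite off a set $N_k$ with $c^*(N_k)=0$ (a standard property of potentials of measures of finite energy, cf.\ \cite[Corollary to Lemma~3.2.3]{F1}), and on $X\setminus N_k$ the Borel function $U^{\sigma_k}:=U^{\sigma_k^+}-U^{\sigma_k^-}$ is well defined.

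The essential step is the following Cartan--Deny inequality: for any $\nu\in\mathcal E$ and any $t>0$,
\[c_*\bigl(\{x\in X\setminus N_\nu:\,|U^\nu(x)|\geqslant t\}\bigr)\leqslant 2\|\nu\|^2/t^2,\]
where $N_\nu$ is defined analogously to $N_k$. To prove it, fix a compact $K\subset\{U^\nu\geqslant t\}$ (in $X\setminus N_\nu$) and any $\mu\in\mathcal E^+(K)\setminus\{0\}$. Fubini (applied separately to $\nu^\pm$ and $\mu$, legitimate since $\mu,\nu^\pm\in\mathcal E$) together with the Cauchy--Schwarz inequality on the pre-Hilbert space $\mathcal E$ yield
\[t\mu(X)\leqslant\int U^\nu\,d\mu=I(\nu,\mu)\leqslant\|\nu\|\cdot\|\mu\|,\]
hence $\mu(X)^2/\|\mu\|^2\leqslant\|\nu\|^2/t^2$. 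Taking the supremum over $\mu$ bounds $c(K)$, and then the supremum over $K\in\mathfrak C_{\{U^\nu\geqslant t\}}$ bounds $c_*$; the symmetric bound on $\{U^\nu\leqslant-t\}$ combined with subadditivity produces the factor $2$.

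Applying the estimate with $\nu=\sigma_k$ and $t=2^{-k/2}$ shows that each Borel set $E_k:=\{x\in X\setminus N_k:\,|U^{\sigma_k}(x)|\geqslant 2^{-k/2}\}$ satisfies $c_*(E_k)\leqslant 2^{1-k}$. The sets $N_k$ and $E_k$ being Borel (hence universally measurable), Lemma~\ref{str} gives $c_*\bigl(\bigcup_{k\geqslant m}E_k\bigr)\leqslant 2^{2-m}\to 0$ as $m\to\infty$ and likewise $c_*\bigl(\bigcup_k N_k\bigr)=0$; consequently $N:=\bigcup_k N_k\cup\bigcap_m\bigcup_{k\geqslant m}E_k$ has inner capacity zero. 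For every $x\notin N$ the potentials $U^{\sigma_k}(x)$ are all defined and satisfy $|U^{\sigma_k}(x)|<2^{-k/2}$ for all sufficiently large $k$, so $U^{\nu_{j_k}}(x)\to U^{\nu_0}(x)$ n.e.\ on $X$. The only real obstacle is the Cartan--Deny estimate itself; the rest is bookkeeping via the countable subadditivity of Lemma~\ref{str}. That estimate is classical for Newtonian kernels (going back to Cartan, as recalled for Lemma~\ref{str}) and here carries over verbatim to arbitrary perfect kernels thanks to the pre-Hilbertian structure of $\mathcal E$.
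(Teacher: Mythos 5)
Your argument is correct and is essentially the classical proof behind the paper's one-line citation (the paper simply refers to the remark attached to Lemma~3.2.4 in Fuglede \cite{F1}, which rests on exactly this Chebyshev-type capacity bound $c_*\bigl(\{|U^\nu|\geqslant t\}\bigr)\leqslant 2\|\nu\|^2/t^2$ plus countable subadditivity and a geometrically convergent subsequence). The only cosmetic point is that to pass from $U^{\sigma_k}(x)\to0$ to $U^{\nu_{j_k}}(x)\to U^{\nu_0}(x)$ you should also throw into $N$ the capacity-zero sets where $U^{|\nu_0|}$ or some $U^{|\nu_{j_k}|}$ is infinite, so that the identity $U^{\sigma_k}=U^{\nu_{j_k}}-U^{\nu_0}$ makes sense pointwise; this is handled by the same countable subadditivity you already invoke.
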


\begin{proof}See Fuglede \cite{F1}, the remark attached to Lemma~3.2.4.\end{proof}

\subsection{Proof of Theorem~\ref{th1}} We first note that the uniqueness of the solution $\widehat{\omega}^A$ to problem (\ref{W}) within $\mathcal E^+_f(A)$ can easily be established by means of standard arguments, based on the convexity of the cone $\mathcal E^+_f(A)$, the parallelogram identity in the pre-Hil\-bert space $\mathcal E$, and the energy principle (cf.\ \cite[Proof of Lemma~6]{Z5a}).

Fixing $\nu\in\mathcal E^+_f(A)$, we further show that (\ref{def1}) and (\ref{def2}) hold true for $\nu$ in place of $\widehat{\omega}^A$ if and only if so do (\ref{def1'}) and (\ref{def2'})~--- again for $\nu$ in place of $\widehat{\omega}^A$. It is enough to verify the "if" part of this claim, since the opposite is obvious by noting that for every $\mu\in\mathcal E^+_f(A)$, $U^{\widehat{\omega}^A-\omega}$ is then $\mu$-equivalent to a positive $\mu$-integrable function that is defined on all of $X$ (see Lemma~\ref{l1} and \cite[Section~IV.3, Corollary~2 to Theorem~4]{B2}; for the concept of {\it $\mu$-equivalent} functions, see \cite[Section~IV.2.4]{B2}).

Assuming, therefore, that (\ref{def1'}) and (\ref{def2'}) are fulfilled, suppose to the
contrary that (\ref{def1}) fails. Then there exists compact $K\subset A$ such that $U^\nu<U^\omega$ on
$K$ while $c(K)>0$, hence
$\int U^{\nu-\omega}\,d\tau<0$ for any $\tau\in\mathcal E^+(K)$, $\tau\ne0$,\footnote{Here we have used \cite[Chapter~IV]{B2} (see Proposition~10 in Section~1 and Theorem~1 in Section~2). Note that such a measure $\tau$ does indeed exist because of our convention (\ref{non0}), cf.\ (\ref{iff}), and also that $f\in\mathcal L^1(\tau)$, $\tau$ being bounded and of finite energy.\label{f1'}} which however contradicts (\ref{def1'}) for $\mu:=\tau$.
Thus (\ref{def1}) does indeed hold, and so, by Lemma~\ref{l1},
\begin{equation}\label{pr-in}
U^\nu\geqslant U^\omega\quad\text{$\nu$-a.e.}
\end{equation}

Now, assuming to the contrary that (\ref{def2}) (with $\nu$ in place of $\widehat{\omega}^A$) fails, we infer from (\ref{pr-in}) that there is
compact $Q\subset A$ such that $\nu(Q)>0$ while $U^\nu>U^\omega$ on $Q$. This together with (\ref{pr-in}) implies, again by use of the assertions from \cite{B2} mentioned in footnote~\ref{f1'}, that $\int
U^{\nu-\omega}\,d\nu>0$, which is however impossible because of (\ref{def2'}).

The equivalence thereby verified enables us to prove the statement on the uniqueness in each of (ii$_1$) and (iii$_1$). Indeed, assuming that both (\ref{def1'}) and (\ref{def2'}) hold for each of $\nu,\nu'\in\mathcal E^+_f(A)$, we get, by use of
\cite[Section~IV.4, Corollary~2 to Theorem~1]{B2},
 \[\langle\nu,\nu'\rangle\geqslant\int U^\omega\,d\nu'=\|\nu'\|^2,\quad\langle\nu',\nu\rangle\geqslant\int
 U^\omega\,d\nu=\|\nu\|^2,\]
and therefore
  \[0\leqslant\|\nu-\nu'\|^2=\bigl(\|\nu\|^2-\langle\nu',\nu\rangle\bigr)+
  \bigl(\|\nu'\|^2-\langle\nu,\nu'\rangle\bigr)\leqslant0,\]
whence $\nu=\nu'$, by the energy principle.

Consider first the case where $\omega\in\mathcal E$; then the Gauss functional has the form
\begin{equation*}
I_f(\mu)=\|\omega-\mu\|^2-\|\omega\|^2\quad\text{for all $\mu\in\mathcal E^+(A)$},
\end{equation*}
whence (\ref{Ef}). Thus the question on the existence of the solution $\widehat{\omega}^A$ to problem (\ref{W}) is reduced to
that on the existence of the orthogonal projection of $\omega$ onto $\mathcal E^+(A)$, i.e.
\begin{equation*}
 \widehat{\omega}^A\in\mathcal E^+(A)\quad\text{and}\quad\|\omega-\widehat{\omega}^A\|=\min_{\mu\in\mathcal E^+(A)}\,\|\omega-\mu\|.
\end{equation*}
The class $\mathcal E^+(A)$ being convex and strongly complete (Section~\ref{sec-intr1}), applying \cite{E2} (Theorem~1.12.3 and Proposition~1.12.4(2)) shows
that this orthogonal projection does exist, and it is uniquely characterized within $\mathcal E^+(A)$ by both
(\ref{def1'}) and (\ref{def2'}). (Here we have utilized the fact that $0\in\mathcal E^+(A)$ and that $\mathcal E^+(A)+\mathcal E^+(A)\subset\mathcal E^+(A)$.) In view of the equivalence of (ii$_1$) and (iii$_1$) proved above, this implies the theorem.\footnote{Cf.\ \cite[Theorem~2.1]{Z-Expo}.}

It thus remains to complete the proof in the case of (a)--(c).
We begin by showing that the solution $\widehat{\omega}^A$ to problem (\ref{W}) exists if and only if there is the (unique) measure
$\mu_0\in\mathcal E^+_f(A)$ satisfying (ii$_1$) (equivalently, (iii$_1$); see above), and then necessarily
\begin{equation}\label{st1}
\mu_0=\widehat{\omega}^A.
\end{equation}

Assume first that this $\widehat{\omega}^A$ exists. To verify (\ref{def1}), suppose to the contrary that there is compact $K\subset A$ with $c(K)>0$, such that $U^{\widehat{\omega}^A}<U^\omega$ on $K$. A straightforward verification then shows that for any $\tau\in\mathcal E^+(K)$, $\tau\ne0$, and any $t\in(0,\infty)$,\footnote{In (\ref{step1}) (as well as in
(\ref{eqpr4'}), (\ref{eqpr4''}), and (\ref{est})),
we use \cite[Section~IV.4, Corollary~2 to Theorem~1]{B2}. To this end, we observe that $f\in\mathcal L^1(\tau)$, for $\tau$ is bounded, being of compact support.}
 \begin{equation}\label{step1}
   I_f(\widehat{\omega}^A+t\tau)-I_f(\widehat{\omega}^A)=2t\int\bigl(U^{\widehat{\omega}^A}-U^\omega\bigr)\,d\tau+
   t^2\|\tau\|^2,
 \end{equation}
 and moreover, similarly as in the third paragraph of this proof,
 \[\int\bigl(U^{\widehat{\omega}^A}-U^\omega\bigr)\,d\tau<0.\]
 Thus, the value on the right in (\ref{step1}) (hence, also that on the left) must be ${}<0$ when $t>0$ is
 small enough, which is however impossible because of $\widehat{\omega}^A+t\tau\in\mathcal E^+_f(A)$.

Having thus established (\ref{def1}), in view of Lemma~\ref{l1} we obtain
 \begin{equation}\label{In}
 U^{\widehat{\omega}^A}\geqslant U^\omega\quad\text{$\widehat{\omega}^A$-a.e.}
 \end{equation}

Suppose now that (\ref{def2}) fails; then, in consequence of (\ref{In}), there is a compact set $Q\subset A$ with $\widehat{\omega}^A(Q)>0$, such that
 $U^{\widehat{\omega}^A}>U^\omega$ on $Q$. Denoting $\upsilon:=\widehat{\omega}^A|_Q$, we have
 $\widehat{\omega}^A-t\upsilon\in\mathcal E^+_f(A)$ for all $t\in(0,1)$, hence
 \begin{equation}\label{eqpr4'}
   I_f(\widehat{\omega}^A-t\upsilon)-I_f(\widehat{\omega}^A)=-2t\int\bigl(U^{\widehat{\omega}^A}-
   U^\omega\bigr)\,d\upsilon+t^2\|\upsilon\|^2,
 \end{equation}
 which is however again impossible when $t$ is getting sufficiently small.

For the "if" part of the above claim, assume that (ii$_1$) holds true for some (unique) $\mu_0\in\mathcal E_f^+(A)$. To show that then
 necessarily $\mu_0=\widehat{\omega}^A$, we only need to verify that
 \begin{equation}\label{N}
 I_f(\mu)-I_f(\mu_0)\geqslant0\quad\text{for any $\mu\in\mathcal E_f^+(A)$}.
 \end{equation}
 But obviously
  \begin{align}I_f(\mu)-I_f(\mu_0)&=\|\mu-\mu_0+\mu_0\|^2-2\int U^\omega\,d\mu-\|\mu_0\|^2+2\int U^\omega\,d\mu_0\notag\\
  {}&=\|\mu-\mu_0\|^2+2\int U^{\mu_0-\omega}\,d(\mu-\mu_0),\label{eqpr4''}\end{align}
 and applying (\ref{def1'}) and (\ref{def2'}), both with $\mu_0$ in place of $\widehat{\omega}^A$, gives (\ref{N}), whence (\ref{st1}).

To complete the proof of the theorem, it thus remains to establish the existence of the solution $\widehat{\omega}^A$ to problem (\ref{W}). To this end, assume first that $A:=K$ is {\it compact}. As $c(K)>0$, we may restrict ourselves to {\it nonzero} $\mu\in\mathcal E^+(K)$, cf.\ (\ref{iff}). For each of those $\mu$, there are $t\in(0,\infty)$ and $\tau\in\mathcal E^+(K)$ with $\tau(K)=1$ such that $\mu=t\tau$. The potential $U^{|\omega|}$ being bounded on $K$ according to (b),
  \begin{align}\notag
  I_f(\mu)&=t^2\|\tau\|^2-2t\int U^\omega\,d\tau\geqslant t^2\|\tau\|^2-2t\int U^{\omega^+}\,d\tau\\
  {}&\geqslant t^2c(K)^{-1}-2tM_K=t^2\bigl(c(K)^{-1}-2M_Kt^{-1}\bigr),\label{est}
  \end{align}
 $M_K\in(0,\infty)$ being introduced by (\ref{MA}) with $A:=K$. Thus, by virtue of (\ref{est}), $I_f(\mu)>0$ for all
 $\mu\in\mathcal E^+(K)$ having the property
  \[\mu(K)>2M_Kc(K)=:L_K\in(0,\infty).\]

  On the other hand, $\widehat{w}_f(K)\leqslant0$, cf.\ (\ref{West}). In view of the above, $\widehat{w}_f(K)$ would therefore be the same if $\mathcal E_f^+(K)$ in (\ref{W}) were replaced by
  \begin{equation}\label{LL}
 \mathcal E^+_{L_K}(K):=\bigl\{\mu\in\mathcal E^+(K):\ \mu(K)\leqslant L_K\bigr\}\quad\bigl({}\subset\mathcal E_f^+(K)\bigr).\end{equation}
 That is,
 \begin{equation}\label{L}
 \widehat{w}_f(K)=\inf_{\mu\in\mathcal E^+_{L_K}(K)}\,I_f(\mu)=:\widehat{w}_{f,L_K}(K),
 \end{equation}
and hence
\[-\infty<-2M_KL_K\leqslant\widehat{w}_f(K)\leqslant0.\]

 Choose a (minimizing) sequence $(\mu_j)\subset\mathcal E^+_{L_K}(K)$ such that
 \[\lim_{j\to\infty}\,I_f(\mu_j)=\widehat{w}_{f,L_K}(K).\]
 Being vaguely bounded in consequence of (\ref{LL}), $(\mu_j)$ is vaguely relatively compact \cite[Section~III.1,
 Proposition~15]{B2}, and so there is a subsequence $(\mu_{j_k})$ converging vaguely to some $\mu_0\in\mathfrak M^+(K)$. (Here
 we have used the facts that the vague topology on $\mathfrak M$ is first-countable, see Section~\ref{sec-intr1}, and that $\mathfrak M^+(K)$ is vaguely closed \cite[Section~III.2, Proposition~6]{B2}.) The energy $I(\cdot)$ being vaguely l.s.c.\ on $\mathfrak M^+$ \cite[Lemma~2.2.1(e)]{F1},
 \[\|\mu_0\|^2\leqslant\liminf_{k\to\infty}\,\|\mu_{j_k}\|^2,\]
 and so $\mu_0\in\mathcal E^+_f(K)$.
 Furthermore, by \cite[Section~IV.4, Corollary~3 to Proposition~5]{B2},
 \[\int U^\omega\,d\mu_0\geqslant\limsup_{k\to\infty}\,\int U^\omega\,d\mu_{j_k}\in(-\infty,\infty),\]
 $U^\omega$ being bounded and u.s.c.\ on the (compact) set $K$, see (b). This altogether gives
 \[\widehat{w}_{f}(K)\leqslant I_f(\mu_0)\leqslant\liminf_{k\to\infty}\,I_f(\mu_{j_k})=\widehat{w}_{f,L_K}(K),\]
 which combined with (\ref{L}) shows that $\mu_0$ serves as the solution $\widehat{\omega}^K$ to problem (\ref{W}). Moreover, the same $\widehat{\omega}^K$ solves the $f$-weighted minimum energy problem (\ref{L}), for
 \[\widehat{\omega}^K(X)=\mu_0(X)\leqslant\liminf_{k\to\infty}\,\mu_{j_k}(X)\leqslant L_K,\]
 the mapping $\nu\mapsto\nu(X)$ being vaguely l.s.c.\ on $\mathfrak M^+$ \cite[Section~IV.1, Proposition~4]{B2}.

We next aim to show that a constant $L_K$, satisfying (\ref{L}), can be chosen to be independent of
$K\in\mathfrak C_A$. To this end, we first conclude from (\ref{def2}) with $A:=K$ that
 \begin{equation*}
  U^{\widehat{\omega}^K}\leqslant U^\omega\quad\text{on $\mathfrak S:=S(\widehat{\omega}^K)$},
 \end{equation*}
$U^\omega$ being u.s.c.\ on $K$ by virtue of (b), whereas $U^{\widehat{\omega}^K}$ being l.s.c.\ on $X$. This combined with (\ref{def1}) gives $U^{\widehat{\omega}^K}=U^\omega$ n.e.\ on $\mathfrak S$, hence $\gamma_{\mathfrak S}$-a.e.\ (Lemma~\ref{l1}), where $\gamma_{\mathfrak S}\in\mathcal E^+(\mathfrak S)$ denotes the capacitary measure on $\mathfrak S$.\footnote{See Fuglede \cite[Theorem~2.5]{F1} for the concept of {\it capacitary measure} on a compact set and its properties. (For any compact $Q\subset X$, $c(Q)<\infty$ by the energy principle, and hence $\gamma_Q\in\mathcal E^+(Q)$ does indeed exist. If $h=1$, i.e.\ Frostman's maximum principle holds, then the capacitary measure is usually referred to as the {\it equilibrium measure}.)} Since $U^{\gamma_{\mathfrak S}}\geqslant1$ n.e.\ on $\mathfrak S$, hence $\widehat{\omega}^K$-a.e., applying Lebesgue--Fubini's theorem \cite[Section~V.8, Theorem~1]{B2} yields
 \begin{align*}
   \widehat{\omega}^K(X)\leqslant
   \int U^{\gamma_{\mathfrak S}}\,d\widehat{\omega}^K=\int U^{\widehat{\omega}^K}\,d\gamma_{\mathfrak S}=\int
   U^\omega\,d\gamma_{\mathfrak S}=
   \int U^{\gamma_{\mathfrak S}}\,d\omega\leqslant\int U^{\gamma_{\mathfrak S}}\,d\omega^+.
    \end{align*}
As $U^{\gamma_{\mathfrak S}}\leqslant1$ on $S(\gamma_{\mathfrak S})$, we therefore get, by $h$-Ugaheri's maximum principle (see (c)),
\begin{equation}\label{La}
\widehat{\omega}^K(X)\leqslant h\omega^+(X)=:L\in(0,\infty)\quad\text{for all $K\in\mathfrak C_A$},\end{equation}
$\omega^+(X)$ being finite according to (a). Thus,
\[\widehat{\omega}^K\in\mathcal E^+_{L}(K)\subset\mathcal E^+_f(K),\] $\mathcal E^+_{L}(K)$ being introduced by (\ref{LL}) with $L$ in place of $L_K$. This implies
\[\widehat{w}_{f,L}(K)\leqslant I_f(\widehat{\omega}^K)=\widehat{w}_f(K)\leqslant\widehat{w}_{f,L}(K),\]
and so $\widehat{\omega}^K$ also solves the problem of minimizing $I_f(\mu)$, where $\mu$ ranges over $\mathcal E^+_{L}(K)$.

We also remark that
\begin{equation}\label{KA}
\widehat{w}_f(K)=\widehat{w}_{f,L}(K)\in[-2M_AL,0]\quad\text{for all $K\in\mathfrak C_A$,}
\end{equation}
the constants $M_A,L\in(0,\infty)$ being introduced by (\ref{MA}) and (\ref{La}), respectively.

To prove the existence of the solution $\widehat{\omega}^A$ to problem (\ref{W}) for noncompact $A$, we first note that the net $\bigl(\widehat{w}_{f,L}(K)\bigr)_{K\in\mathfrak C_A}$ decreases, and moreover, by virtue of (\ref{KA}),
\begin{equation}\label{lim}
\infty<\lim_{K\uparrow A}\,\widehat{w}_{f,L}(K)\leqslant0.
\end{equation}

For any compact $K,K'\subset A$ such that $K\subset K'$,
\[(\widehat{\omega}^K+\widehat{\omega}^{K'})/2\in\mathcal E_L^+(K'),\]
whence
\[\|\widehat{\omega}^K+\widehat{\omega}^{K'}\|^2-4\int
U^\omega\,d(\widehat{\omega}^K+\widehat{\omega}^{K'})\geqslant4\widehat{w}_{f,L}(K')=4I_f(\widehat{\omega}^{K'}),\]
which yields, by applying the parallelogram identity to $\widehat{\omega}^K,\widehat{\omega}^{K'}\in\mathcal E^+$,
\begin{equation}\label{fund}
 \|\widehat{\omega}^K-\widehat{\omega}^{K'}\|^2\leqslant2I_f(\widehat{\omega}^K)-2I_f(\widehat{\omega}^{K'}).
\end{equation}
Noting from (\ref{lim}) that the net $\bigl(I_f(\widehat{\omega}^K)\bigr)_{K\in\mathfrak C_A}$, being equal to $\bigl(\widehat{w}_{f,L}(K)\bigr)_{K\in\mathfrak C_A}$, is Cauchy in $\mathbb R$, we infer from
(\ref{fund}) that the net $(\widehat{\omega}^K)_{K\in\mathfrak C_A}$ is strong Cauchy in $\mathcal E^+(A)$. As $\mathcal E^+(A)$ is strongly complete (Section~\ref{sec-intr1}), there exists $\zeta\in\mathcal E^+(A)$ such that
\begin{equation}\label{conv}
 \widehat{\omega}^K\to\zeta\quad\text{strongly (hence vaguely) in $\mathcal E^+(A)$ as $K\uparrow A$.}
\end{equation}
Moreover, $\zeta\in\mathcal E^+_L(A)$, since $\mu\mapsto\mu(X)$ is vaguely l.s.c.\ on $\mathfrak M^+$; and so the $\zeta$-mea\-s\-urable set $A$ is actually $\zeta$-integrable \cite[Section~IV.5, Corollary~1 to Theorem~5]{B2}.

We claim that the same $\zeta$ serves as the solution $\widehat{\omega}^A$ to problem (\ref{W}). As shown above, cf.\ (\ref{st1}), this will follow if we prove both (\ref{def1}) and (\ref{def2}) for $\zeta$ in place of $\widehat{\omega}^A$.

Clearly, (\ref{def1}) only needs to be verified for any given compact subset $K_0$ of $A$. The strong topology on $\mathcal E^+$ being first-countable, in view of (\ref{conv}) there is a subsequence
$(\widehat{\omega}^{K_j})_{j\in\mathbb N}$ of the net $(\widehat{\omega}^K)_{K\in\mathfrak C_A}$ such that $K_j\supset K_0$ for all $j$,\footnote{If this does not hold, we replace $\mathfrak C_A$ by its subset $\mathfrak C_A':=\{K\cup K_0:\ K\in\mathfrak C_A\}$ with the partial order relation inherited from $\mathfrak C_A$, and then apply to $\mathfrak C_A'$ the same arguments as just above.\label{FFot}} and
\begin{equation}\label{J}
\widehat{\omega}^{K_j}\to\zeta\quad\text{strongly (hence vaguely) in $\mathcal E^+(A)$ as $j\to\infty$.}
\end{equation}
Passing if necessary to a subsequence and changing notations, we derive from (\ref{J}), by exploiting Lemma~\ref{l2}, that
\begin{equation}\label{JJ}U^\zeta=\lim_{j\to\infty}\,U^{\widehat{\omega}^{K_j}}\quad\text{n.e.\ on $X$}.\end{equation}
Applying now (\ref{def1}) to each of $\widehat{\omega}^{K_j}$, and then letting $j\to\infty$, we infer from (\ref{JJ}) that (\ref{def1}) (for $\zeta$ in place of $\widehat{\omega}^A$) does indeed hold n.e.\ on $K_0$,\footnote{Here we have utilized the countable subadditivity of inner capacity on universally measurable sets (see Fuglede \cite[Lemma~2.3.5]{F1}, cf.\ also Lemma~\ref{str} above).} whence n.e.\ on $A$.

It remains to prove (\ref{def2}) for $\zeta$ in place of $\widehat{\omega}^A$. The set $A$ being $\zeta$-integrable, there is a countable union $A'$ of pairwise disjoint compact subsets of $A$ such that $A\setminus A'$ is $\zeta$-neg\-ligible, cf.\ \cite[Section~IV.4, Corollary~2 to Theorem~4]{B2}. Thus, by virtue of \cite[Section~IV.4, Proposition~9]{B2}, it is enough to verify the equality
\begin{equation}\label{kk}
U^\zeta=U^\omega\quad\text{$\zeta$-a.e.\ on $K$},
\end{equation}
where $K$ is any given compact subset of $A'$. As in footnote~\ref{FFot}, there is no loss of generality in assuming $K\subset K_j$ for all $j\in\mathbb N$, the sets $K_j$ being the same as above.

It is clear from (\ref{def2}) applied to each of those $K_j$ that
\begin{equation}\label{Kj}
  U^{\widehat{\omega}^{K_j}}\leqslant U^\omega\quad\text{on $S(\widehat{\omega}^{K_j})$},
 \end{equation}
$U^\omega$ being u.s.c.\ on $K_j$ by (b), while $U^{\widehat{\omega}^{K_j}}$ being l.s.c.\ on $X$.
Since $(\widehat{\omega}^{K_j})$ converges to $\zeta$ vaguely, see (\ref{J}), for every
$x\in S(\zeta)\cap K$ there exist a subsequence $(K_{j_k})$ of $(K_j)$ and points $x_{j_k}\in S(\widehat{\omega}^{K_{j_k}})$ such that
$x_{j_k}\to x$ as $k\to\infty$. Thus, by (\ref{Kj}),
\[U^{\widehat{\omega}^{K_{j_k}}}(x_{j_k})\leqslant U^\omega(x_{j_k})\quad\text{for all $k\in\mathbb N$}.\]
Letting here $k\to\infty$, by the upper semicontinuity of $U^\omega$ on the compact subsets of $A$ and the lower semicontinuity of the mapping
$(x,\mu)\mapsto U^\mu(x)$ on $X\times\mathfrak M^+$, $\mathfrak M^+$ being equipped with the vague
topology \cite[Lemma~2.2.1(b)]{F1}, we obtain
\[U^\zeta(x)\leqslant U^\omega(x)\quad\text{for all $x\in S(\zeta)\cap K$},\]
which combined with (\ref{def1}) gives (\ref{kk}), whence (\ref{def2}) (for $\zeta$ in place of $\widehat{\omega}^A$).

This implies that
\begin{equation}\label{xi'}
\zeta=\widehat{\omega}^A,
\end{equation}
thereby completing the proof of the whole theorem.

\begin{corollary}\label{H}
 If {\rm(a)--(c)} are fulfilled, then
 \[\widehat{\omega}^A(X)\leqslant h\omega^+(X)\in(0,\infty),\]
 $h$ being the constant appearing in the $h$-Ugaheri maximum principle.
 \end{corollary}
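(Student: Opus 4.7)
The plan is to combine the uniform bound already derived during the proof of Theorem \ref{th1} with the vague convergence of the approximating net. The corollary is essentially a direct consequence of work already done; no new machinery is required.

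Recall from equation (\ref{La}) in the proof of Theorem \ref{th1} that we have already established the inequality
\[\widehat{\omega}^K(X) \leqslant h\omega^+(X) = L \in (0,\infty) \quad \text{for every } K \in \mathfrak{C}_A,\]
via $h$-Ugaheri's maximum principle applied to the capacitary measure $\gamma_{\mathfrak S}$ on $\mathfrak S = S(\widehat{\omega}^K)$, together with Lebesgue--Fubini. The finiteness of the right-hand side is ensured by hypothesis (a), i.e.\ $\omega^+(X)<\infty$.

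The remaining step is to pass to the limit. From (\ref{conv}) and (\ref{xi'}), the net $(\widehat{\omega}^K)_{K\in\mathfrak{C}_A}$ converges vaguely in $\mathcal{E}^+(A)$ to $\widehat{\omega}^A$ as $K\uparrow A$. Since the total-mass functional $\nu\mapsto\nu(X)$ is vaguely lower semicontinuous on $\mathfrak{M}^+$ (\cite[Section~IV.1, Proposition~4]{B2}), we conclude
\[\widehat{\omega}^A(X) \leqslant \liminf_{K\uparrow A}\,\widehat{\omega}^K(X) \leqslant h\omega^+(X),\]
which is the asserted inequality.

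I do not foresee any real obstacle here: the entire difficulty---controlling $\widehat{\omega}^K(X)$ uniformly in $K$ by means of the capacitary-measure argument that requires both (b) (so that $U^\omega$ is u.s.c.\ on each $K$, ensuring $U^{\widehat{\omega}^K}\leqslant U^\omega$ on $S(\widehat{\omega}^K)$) and (c) (which delivers the factor $h$)---is already embedded in the proof of Theorem \ref{th1}. The corollary merely harvests this bound by taking a vague lower limit along $K\uparrow A$.
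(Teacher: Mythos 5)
Your proposal is correct and follows exactly the paper's own argument: it cites (\ref{La}), (\ref{conv}), and (\ref{xi'}) together with the vague lower semicontinuity of $\nu\mapsto\nu(X)$ on $\mathfrak M^+$, which is precisely the one-line proof given in the text.
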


\begin{proof}
This is seen from (\ref{La}), (\ref{conv}), and (\ref{xi'}) by use of the vague lower semicontinuity of the mapping $\nu\mapsto\nu(X)$ on $\mathfrak M^+$.
\end{proof}

\subsection{Proof of Theorem~\ref{th1'}} It runs precisely in the same manner as the proof of Theorem~2.5 in \cite{Z-Expo}, dealing with $\omega\in\mathcal E$, the only difference being in applying Theorem~\ref{th1} of the present paper in place of \cite[Theorem~2.1]{Z-Expo}.

\section{Convergence of pseudo-balayage for monotone families of sets}

\begin{theorem}\label{conv=ps}$\widehat{\omega}^K\to\widehat{\omega}^A$ strongly and vaguely in $\mathcal E^+(A)$ as $K\uparrow A$. If $A$ is Borel, then the same remains valid with $\widehat{\omega}^K$ and $\widehat{\omega}^A$ replaced by $\widehat{\omega}^{\,*K}$ and $\widehat{\omega}^{\,*A}$, respectively.
\end{theorem}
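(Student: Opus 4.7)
The strategy has three stages: first, show that the net $(\widehat{\omega}^K)_{K \in \mathfrak{C}_A}$ is strong Cauchy in $\mathcal{E}^+(A)$; second, identify its strong limit with $\widehat{\omega}^A$ via the characterizations in Theorem~\ref{th1}; third, reduce the Borel case to the preceding one using (\ref{io}).

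For the first stage, the inclusion $\mathcal{E}^+_f(K) \subset \mathcal{E}^+_f(K')$ whenever $K \subset K'$ in $\mathfrak{C}_A$ implies that the net $\bigl(I_f(\widehat{\omega}^K)\bigr)_{K \in \mathfrak{C}_A} = \bigl(\widehat{w}_f(K)\bigr)_{K \in \mathfrak{C}_A}$ is monotonically decreasing. It is also bounded below --- by $-\|\omega\|^2$ in the case $\omega \in \mathcal{E}$ (since then $I_f(\mu) = \|\omega - \mu\|^2 - \|\omega\|^2$), and by $-2M_A L$ in the case (a)--(c) by (\ref{KA}) --- and so converges in $\mathbb{R}$. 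Applying the parallelogram identity to $\widehat{\omega}^K, \widehat{\omega}^{K'} \in \mathcal{E}^+$, together with the observation that $(\widehat{\omega}^K + \widehat{\omega}^{K'})/2$ lies in $\mathcal{E}^+_f(K')$ whenever $K \subset K'$, yields the fundamental estimate (\ref{fund}); combined with the preceding, this shows that $(\widehat{\omega}^K)_{K \in \mathfrak{C}_A}$ is strong Cauchy. Strong completeness of $\mathcal{E}^+(A)$ and perfectness of $\kappa$ then produce $\zeta \in \mathcal{E}^+(A)$ with $\widehat{\omega}^K \to \zeta$ strongly, and hence vaguely, as $K \uparrow A$.

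For the second stage, in the case (a)--(c) the identification $\zeta = \widehat{\omega}^A$ is in fact already carried out in the concluding portion of the proof of Theorem~\ref{th1} (the argument from (\ref{JJ}) through (\ref{xi'})): one passes along a subsequence $(\widehat{\omega}^{K_j})$ absorbing any prescribed compact $K_0 \subset A$, invokes Lemma~\ref{l2} to obtain pointwise n.e.\ convergence of potentials, applies Lemma~\ref{str} to derive (\ref{def1}) for $\zeta$, and then combines the joint lower semicontinuity of $(x,\mu) \mapsto U^\mu(x)$ with the upper semicontinuity of $U^\omega$ on compact subsets of $A$ (hypothesis (b)) to deduce (\ref{def2}); the uniqueness assertion in Theorem~\ref{th1} then forces $\zeta = \widehat{\omega}^A$. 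In the case $\omega \in \mathcal{E}$, where (b) is not assumed, I would instead verify (\ref{def1'}) and (\ref{def2'}) for $\zeta$ by Hilbert-space arguments: for any compact $K_0 \subset A$ and $\mu \in \mathcal{E}^+(K_0)$, the projection property $\langle \omega - \widehat{\omega}^K, \mu \rangle \leq 0$ is valid for every $K \supset K_0$ and passes to the strong limit to give $\langle \omega - \zeta, \mu \rangle \leq 0$; strong approximation of a general $\mu \in \mathcal{E}^+(A) = \mathcal{E}^+_f(A)$ by its traces $\mu|_K$ extends this to all of $\mathcal{E}^+_f(A)$, and $\langle \omega - \widehat{\omega}^K, \widehat{\omega}^K \rangle = 0$ combined with strong convergence gives (\ref{def2'}).

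For the third stage, every compact $K$ is Borel, so (\ref{io}) in Theorem~\ref{th1'} yields $\widehat{\omega}^{\,*K} = \widehat{\omega}^K$ and $\widehat{\omega}^{\,*A} = \widehat{\omega}^A$ whenever $A$ is Borel, whence the second assertion follows immediately from the first. The main obstacle I anticipate is step two in the (a)--(c) case: selecting a subsequence of the net $(\widehat{\omega}^K)$ that contains any fixed compact $K_0 \subset A$ requires replacing $\mathfrak{C}_A$ by the cofinal subfamily $\{K \cup K_0 : K \in \mathfrak{C}_A\}$, and the subsequent verification of (\ref{def2}) for $\zeta$ rests on a rather delicate interplay between vague convergence of the measures, lower semicontinuity of the potential map, and the upper semicontinuity of $U^\omega$ guaranteed by (b) --- precisely the place where hypothesis (b) is essential.
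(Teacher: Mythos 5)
Your proposal is correct and follows essentially the same route as the paper: for $\omega$ satisfying (a)--(c) the first assertion is exactly the combination of (\ref{conv}) and (\ref{xi'}) already established inside the proof of Theorem~\ref{th1}, and the Borel case reduces to it via (\ref{io}), precisely as the paper argues. The only divergence is the case $\omega\in\mathcal E$, where the paper simply cites \cite[Theorem~4.1]{Z-Expo} while you supply a direct orthogonal-projection argument (passing (\ref{def1'}) and (\ref{def2'}) to the strong limit and approximating a general $\mu\in\mathcal E^+(A)$ by its traces $\mu|_K$); that argument is sound and self-contained, but it is a substitute for a citation rather than a different method.
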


\begin{proof}On account of Theorem~\ref{th1'}, the latter claim is reduced to the former.
If $\omega\in\mathfrak M$ meets (a)--(c), then the former claim follows, in turn, by substituting (\ref{xi'}) into (\ref{conv}). For the remaining case $\omega\in\mathcal E$, see \cite[Theorem~4.1]{Z-Expo}.\end{proof}

\begin{theorem}\label{conv=ps'}Consider a decreasing sequence $(A_j)$ of quasiclosed sets with the intersection $A$ of nonzero inner capacity, and a measure $\omega\in\mathfrak M$ such that either $\omega\in\mathcal E$ or {\rm(a)}--{\rm(c)} are fulfilled with $A_1$ in place of $A$. Then
\begin{equation}\label{e-conv}
\widehat{\omega}^{A_j}\to\widehat{\omega}^A\quad\text{strongly and vaguely in $\mathcal E^+$ as $K\uparrow A$}.
\end{equation}
If moreover the sets $A_j$, $j\in\mathbb N$, are Borel, then {\rm(\ref{e-conv})} remains valid with $\widehat{\omega}^{A_j}$ and $\widehat{\omega}^A$ replaced by $\widehat{\omega}^{\,*A_j}$ and $\widehat{\omega}^{\,*A}$, respectively.
\end{theorem}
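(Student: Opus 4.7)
I will adapt the noncompact portion of the proof of Theorem~\ref{th1} to the present setting, with the role of the net $K\uparrow A$ played by the decreasing sequence $A_j\downarrow A$. First, I check that each $A_j$ as well as $A$ carries the standing hypotheses. Countable intersections of quasiclosed sets are quasiclosed (by countable subadditivity of $c^*$ on universally measurable sets, applied to the symmetric differences $A_j\triangle F_j$ for closed approximants $F_j$), so $A$ is quasiclosed and, together with each $A_j$, has a strongly closed cone of energy-finite positive measures concentrated on it; likewise if (a)--(c) hold on $A_1$ they persist on every $A_j\subset A_1$, since $M_{A_j}\leqslant M_{A_1}<\infty$ and $\mathfrak C_{A_j}\subset\mathfrak C_{A_1}$. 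Theorem~\ref{th1} thus supplies $\widehat{\omega}^{A_j}\in\mathcal E^+_f(A_j)$ and $\widehat{\omega}^A\in\mathcal E^+_f(A)$.

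Since $A\subset A_{j+1}\subset A_j$, the chain $\mathcal E^+_f(A)\subset\mathcal E^+_f(A_{j+1})\subset\mathcal E^+_f(A_j)$ forces $\widehat{w}_f(A_j)=I_f(\widehat{\omega}^{A_j})$ to be increasing in $j$ and bounded above by $0$ in view of (\ref{West}), so convergent in $\mathbb R$. For $k\geqslant j$, both $\widehat{\omega}^{A_j}$ and $\widehat{\omega}^{A_k}$ lie in $\mathcal E^+_f(A_j)$, as does their midpoint, and the parallelogram computation reproducing (\ref{fund}) yields
\[\|\widehat{\omega}^{A_j}-\widehat{\omega}^{A_k}\|^2\leqslant 2I_f(\widehat{\omega}^{A_k})-2I_f(\widehat{\omega}^{A_j}),\]
so $(\widehat{\omega}^{A_j})$ is strong Cauchy. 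By perfectness it converges strongly and vaguely to some $\zeta\in\mathcal E^+$. Strong closedness of each $\mathcal E^+(A_k)$ together with $\widehat{\omega}^{A_j}\in\mathcal E^+(A_k)$ for $j\geqslant k$ forces $\zeta\in\bigcap_k\mathcal E^+(A_k)$; then $\zeta(A_k^c)=0$ for all $k$ combined with $A^c=\bigcup_k A_k^c$ gives $\zeta\in\mathcal E^+(A)$, while the uniform mass bound $\widehat{\omega}^{A_j}(X)\leqslant h\omega^+(X)$ from Corollary~\ref{H} and (\ref{MA}) place $\zeta$ in $\mathcal E^+_f(A)$.

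It remains to identify $\zeta$ with $\widehat{\omega}^A$ by verifying (iii$_1$) of Theorem~\ref{th1}. For (\ref{def1}), pass to a subsequence along which $U^{\widehat{\omega}^{A_{j_k}}}\to U^\zeta$ pointwise n.e.\ (Lemma~\ref{l2}), use $U^{\widehat{\omega}^{A_{j_k}}}\geqslant U^\omega$ n.e.\ on $A_{j_k}\supset A$, and invoke Lemma~\ref{str}. For (\ref{def2}), decompose $A$ modulo a $\zeta$-null set into a countable disjoint union of compact subsets, reducing the task to proving $U^\zeta\leqslant U^\omega$ on $S(\zeta)\cap K$ for each compact $K\subset A$; for such $x$, approximate by points $x_{j_k}\in A_{j_k}\cap\{U^{\widehat{\omega}^{A_{j_k}}}=U^\omega\}$, a set of full $\widehat{\omega}^{A_{j_k}}$-measure (by (\ref{def2}) applied to each $A_{j_k}$) that meets every neighborhood of $x$ for $k$ large (by vague convergence), and then combine the joint lower semicontinuity of $(y,\mu)\mapsto U^\mu(y)$ with the upper semicontinuity of $U^\omega$ on compact subsets of $A_1$ to pass to the limit. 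The Borel case is immediate from Theorem~\ref{th1'} applied to each $A_j$ and to $A=\bigcap_jA_j$.

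\textbf{Main obstacle.} The sole delicate step is the proof of (\ref{def2}) in the (a)--(c) setting: since $A_j$ need not be closed, $S(\widehat{\omega}^{A_j})\subset\overline{A_j}$ can protrude from $A_1$, so the support-based approximation used at the end of the proof of Theorem~\ref{th1} does not transcribe literally. The workaround above, drawing approximants from the full-$\widehat{\omega}^{A_{j_k}}$-measure equality set inside $A_{j_k}\subset A_1$ rather than from the topological support, restores the setting in which the upper semicontinuity hypothesis~(b) applies.
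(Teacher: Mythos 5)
Your argument follows the paper's own proof almost step for step: the same monotonicity of $\widehat{w}_f(A_j)$, the same parallelogram estimate yielding strong Cauchyness in each $\mathcal E^+(A_k)$, and the same identification of the strong limit via characterization (iii$_1$) of Theorem~\ref{th1}, with (\ref{def1}) obtained from Lemmas~\ref{str} and \ref{l2} and (\ref{def2}) reduced to compact subsets of $A$ modulo a null set of the limit measure. The one place where you genuinely deviate is the passage to the limit in (\ref{def2}): the paper first upgrades the $\widehat{\omega}^{A_j}$-a.e.\ equality to the pointwise inequality $U^{\widehat{\omega}^{A_j}}\leqslant U^\omega$ on $S(\widehat{\omega}^{A_j})\cap K$ (its (\ref{Aj})) and then approximates $x\in S(\mu_0)\cap K$ by support points $x_{j_k}\in S(\widehat{\omega}^{A_{j_k}})\cap K$, whereas you approximate $x$ directly by points of the full-measure set $A_{j_k}\cap\{U^{\widehat{\omega}^{A_{j_k}}}=U^\omega\}$. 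Your variant is legitimate (every open neighborhood of $x\in S(\zeta)$ has positive $\widehat{\omega}^{A_{j_k}}$-measure for large $k$ by vague convergence, hence meets that set), and it cleanly sidesteps the issue you correctly flag: since $A_j$ is only quasiclosed, $S(\widehat{\omega}^{A_j})$ may protrude from $A_1$, so hypothesis (b) cannot be invoked at arbitrary support points; your approximants stay inside $A_{j_k}\subset A_1$, so the compact set $\{x_{j_k}\}\cup\{x\}$ lies in $A_1$ and the upper semicontinuity of $U^{\omega^+}$ there applies.

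The only real omission is the branch $\omega\in\mathcal E$. Your verification of (\ref{def2}) rests on the upper semicontinuity of $U^{\omega^+}$ on compact subsets of $A_1$ and on the mass bound of Corollary~\ref{H}, both of which are available only under (a)--(c); when $\omega\in\mathcal E$ neither (b) nor boundedness of $\omega^+$ is assumed, so that part of your argument does not go through as written. The paper disposes of that case by citing \cite[Theorem~4.2]{Z-Expo}; alternatively, for $\omega\in\mathcal E$ one identifies the strong limit through characterization (ii$_1$) rather than (iii$_1$), since then $\int U^{\omega}\,d\mu=\langle\omega,\mu\rangle$ for $\mu\in\mathcal E^+$ and all the relevant inner products pass to the limit under strong convergence. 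You should add either that projection-type argument or the citation to complete the proof.
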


\begin{proof} To verify the former part of the theorem, note that $\widehat{\omega}^{A_j}$, $j\in\mathbb N$, and $\widehat{\omega}^A$ do exist by virtue of Theorem~\ref{th1}, a countable intersection of quasiclosed sets being likewise quasiclosed \cite[Lemma~2.3]{F71}.\footnote{Recall that for quasiclosed $A\subset X$, $\mathcal E^+(A)$ is strongly closed \cite[Theorem~2.13]{Z-Oh}.} Moreover, (\ref{e-conv}) does indeed hold whenever $\omega\in\mathcal E$, see \cite[Theorem~4.2]{Z-Expo}.
It thus remains to consider the case of $\omega$ satisfying (a)--(c) with $A_1$ in place of $A$. Clearly, $\widehat{w}_f(A_j)$ increases as $j\to\infty$, and does not exceed $\widehat{w}_f(A)$, cf.\ (\ref{W}). Together with (\ref{e-i}), this gives
\begin{equation*}
-\infty<\lim_{j\to\infty}\,I_f(\widehat{\omega}^{A_j})\leqslant0,
\end{equation*}
and hence the sequence $\bigl(I_f(\widehat{\omega}^{A_j})\bigr)$ is Cauchy in $\mathbb R$. As $\widehat{\omega}^{A_{j+1}}\in\mathcal E^+_f(A_{j+1})\subset\mathcal E^+_f(A_j)$, while $\widehat{\omega}^{A_j}$ minimizes $I_f(\mu)$ over $\mu\in\mathcal E^+_f(A_j)$, we conclude, by utilizing the convexity of the class
$\mathcal E^+_f(A_j)$ and the parallelogram identity applied to $\widehat{\omega}^{A_j}$ and $\widehat{\omega}^{A_{j+1}}$, that
\[\|\widehat{\omega}^{A_{j+1}}-\widehat{\omega}^{A_j}\|^2\leqslant2I_f(\widehat{\omega}^{A_{j+1}})-2I_f(\widehat{\omega}^{A_j}),\]
and hence the sequence $(\widehat{\omega}^{A_j})_{j\geqslant k}$ is strong Cauchy in
$\mathcal E^+(A_k)$ for each $k\in\mathbb N$. The cone $\mathcal E^+(A_k)$ being strongly closed, hence strongly complete (Section~\ref{sec-intr1}), there exists a measure $\mu_0$ which belongs to $\mathcal E^+(A_k)$ for each $k\in\mathbb N$, and such that
\begin{equation}\label{LL''}
\widehat{\omega}^{A_j}\to\mu_0\quad\text{strongly (hence, vaguely) in $\mathcal E^+$}.
\end{equation}
We assert that
\begin{equation}\label{EE''}
 \mu_0=\widehat{\omega}^A,
\end{equation}
which substituted into (\ref{LL''}) would have implied (\ref{e-conv}).

To this end, we first note that, being the (countable) union of $\mu_0$-negligible $(A_k)^c$, the set $A^c$ is likewise $\mu_0$-neg\-li\-gi\-b\-le \cite[Section~IV.2, Proposition~4]{B2}, and therefore
\begin{equation}\label{mu0}
\mu_0\in\mathcal E^+(A).
\end{equation}
Moreover, by virtue of Corollary~\ref{H}, $\widehat{\omega}^{A_j}(X)\leqslant h\omega^+(X)<\infty$ for all $j$.
Since $\widehat{\omega}^{A_j}\to\mu_0$ vaguely, while $\nu\mapsto\nu(X)$ is vaguely l.s.c.\ on $\mathfrak M^+$, the measure $\mu_0$ must, therefore, be {\it bounded}. Combined with (\ref{mu0}), this shows that, actually,
\begin{equation}\label{mu0f}
\mu_0\in\mathcal E^+_f(A).
\end{equation}
Yet another conclusion is that the set $A$ is $\mu_0$-integrable \cite[Section~IV.5, Corollary~1 to Theorem~5]{B2}, the $\mu_0$-measurability of $A$ being clear from (\ref{mu0}).

In view of (\ref{def1}) applied to $\widehat{\omega}^{A_j}$, $U^{\widehat{\omega}^{A_j}}\geqslant U^\omega$ n.e.\ on $A_j$,
hence n.e.\ on the (smaller) set $A$, which yields, by exploiting (\ref{LL''}) and Lemmas~\ref{str} and \ref{l2},
\begin{equation}\label{mu0'}U^{\mu_0}\geqslant U^\omega\quad\text{n.e.\ on $A$}.\end{equation}
On account of Theorem~\ref{th1}(iii$_1$), this together with (\ref{mu0f}) implies that (\ref{EE''}) will be established once we verify the equality
\begin{equation}\label{mu00}U^{\mu_0}=U^\omega\quad\text{$\mu_0$-a.e.}\end{equation}

As $A$ is $\mu_0$-integrable, utilizing \cite[Section~IV.4, Corollary~2 to Theorem~4]{B2} shows that there exists $A'\subset A$, a countable union of pairwise disjoint compact sets, such that $A\setminus A'$ is $\mu_0$-negligible. This implies, by use of \cite[Section~IV.4, Proposition~9]{B2}, that (\ref{mu00}) only needs to be verified for any given compact $K\subset A'$.

Applying (\ref{def2}) to each of the sets $A_j$, we obtain
\begin{equation}\label{Aj}
  U^{\widehat{\omega}^{A_j}}\leqslant U^\omega\quad\text{on $S(\widehat{\omega}^{A_j})\cap K$},
 \end{equation}
$U^\omega$ being u.s.c.\ on the compact subsets of $A_1$, while $U^{\widehat{\omega}^{A_j}}$ being l.s.c.\ on $X$. As $(\widehat{\omega}^{A_j})$ converges to $\mu_0$ vaguely, for every
$x\in S(\mu_0)\cap K$ there exist a subsequence $(A_{j_k})$ of $(A_j)$ and points $x_{j_k}\in S(\widehat{\omega}^{A_{j_k}})\cap K$ such that
$x_{j_k}\to x$ when $k\to\infty$. Thus, by (\ref{Aj}),
\[U^{\widehat{\omega}^{A_{j_k}}}(x_{j_k})\leqslant U^\omega(x_{j_k})\quad\text{for all $k\in\mathbb N$}.\]
Letting here $k\to\infty$, in view of the upper semicontinuity of $U^\omega$ on $K$ and the lower semicontinuity of the mapping
$(x,\mu)\mapsto U^\mu(x)$ on $X\times\mathfrak M^+$, $\mathfrak M^+$ being equipped with the vague
topology \cite[Lemma~2.2.1(b)]{F1}, we get
\[U^{\mu_0}(x)\leqslant U^\omega(x)\quad\text{for all $x\in S(\mu_0)\cap K$},\]
which combined with (\ref{mu0'}) gives (\ref{mu00}), whence (\ref{e-conv}).

Substituting (\ref{io}) into (\ref{e-conv}) we obtain the latter claim of the theorem, thereby completing the whole proof. \end{proof}

\section{Pseudo-balayage in the inner Gauss variational problem}\label{sec-appl}

The aim of the rest of this study is to show that the concept of inner pseudo-bal\-ayage, introduced by means of Theorem~\ref{th1}, serves as a powerful tool in the inner Gauss
variational problem, the problem on the existence of $\lambda_{A,f}\in\breve{\mathcal E}^+_f(A)$ with
\begin{equation}\label{G}
 I_f(\lambda_{A,f})=\inf_{\mu\in\breve{\mathcal E}^+_f(A)}\,I_f(\mu)=:w_f(A).
\end{equation}
Here $I_f(\mu)$ is the energy of $\mu$ evaluated in the presence of the external field $f=-U^\omega$, see (\ref{If}), referred to as the Gauss functional or the $f$-weighted energy, while\footnote{Since $f$ is $\mu$-integrable for every $\mu\in\breve{\mathcal E}^+(A)$, we actually have
\[\breve{\mathcal E}^+_f(A)=\breve{\mathcal E}^+(A):=\bigl\{\mu\in\mathcal E^+(A):\ \mu(X)=1\bigr\}.\] Note that $\breve{\mathcal E}^+(A)\ne\varnothing$, for $\mathcal E^+(A)\ne\{0\}$, cf.\ (\ref{iff}) and (\ref{non0}).}
\[\breve{\mathcal E}^+_f(A):=\bigl\{\mu\in\mathcal E^+_f(A):\ \mu(X)=1\bigr\}.\]
For the bibliography on this problem, see \cite{BHS,Dr0,O,ST,Z-Rarx,Z-Oh} and references therein.

Since $f$ is $\mu$-integrable for each $\mu$ from the (nonempty) class $\breve{\mathcal E}^+_f(A)$, we have
\begin{equation}\label{ww}
-\infty<\widehat{w}_f(A)\leqslant w_f(A)<\infty,
\end{equation}
the first inequality being clear from (\ref{e-i}). Thus $w_f(A)$ is {\it finite}, which enables us to prove, by use of the convexity of the class $\breve{\mathcal E}^+_f(A)$ and the pre-Hilbert structure on the space $\mathcal E$, that the solution $\lambda_{A,f}$ to problem (\ref{G}) is {\it unique} (cf.\ \cite[Lemma~6]{Z5a}).

As for the existence of $\lambda_{A,f}$, assume for a moment that $A:=K$ is compact. If moreover $f$ is bounded and l.s.c.\ on $K$ (as in the case in question), then $\lambda_{K,f}$ does indeed exist, because then the class
$\breve{\mathfrak M}^+(K):=\{\mu\in\mathfrak M^+(K):\ \mu(X)=1\}$ is vaguely compact, cf.\ \cite[Section~III.1.9, Corollary~3]{B2}, while $I_f(\mu)$ is vaguely l.s.c.\ on $\mathfrak M^+(K)$, cf.\ \cite[Section~IV.1, Proposition~4]{B2}. (See \cite[Theorem~2.6]{O}.) But if any of the above two assumptions is not fulfilled, then these arguments, based on the vague topology only, fail down, and the problem becomes "rather difficult" (Ohtsuka \cite[p.~219]{O}).

Our analysis of the inner Gauss variational problem for $A$ and $f=-U^\omega$, indicated in Section~\ref{sec-intr1} above, is based on the concept of the inner pseudo-balayage $\widehat{\omega}^A$, introduced in the present study, and is mainly performed with the aid of the approach originated in our recent work \cite{Z-Oh}. However, \cite{Z-Oh} was only concerned with external fields created by {\it positive} measures of {\it finite} energy, and exactly this circumstance made it possible to exploit efficiently the two topologies~--- strong and vague, whereas the treatment of the problem for {\it signed} $\omega$ whose energy might be {\it infinite}, needs the involvement of more delicate arguments. Because of this obstacle, we have to impose on the objects in question some additional requirements.

Referring to \cite[Section~6.1]{Z-Expo} for necessary and/or sufficient conditions for the solvability of problem (\ref{G}) in the case $\omega\in\mathcal E$, we assume in the sequel that, along with the requirements (a)--(c), the following (d)--(f) hold true:
\begin{itemize}
  \item[(d)] {\it $\kappa(x,y)$ is continuous for $x\ne y$.}
  \item[(e)] {\it When $y\to\infty_X$, $\kappa(\cdot,y)\to0$ uniformly on compact subsets of $X$.}
  \item[(f)] {\it $\omega$ is compactly supported in $\overline{A}^c$, where $\overline{A}:={\rm Cl_XA}$.}
\end{itemize}
(Here $\infty_X$ denotes the Alexandroff point of $X$, see \cite[Section~I.9.8]{B1}.) These permanent assumptions will not be repeated henceforth.

Then necessary and sufficient conditions for the solvability of problem (\ref{G}) can be given in the following rather simple form.

\begin{theorem}\label{th-solv1} For the solution $\lambda_{A,f}$ to exist, it is necessary and sufficient that
\begin{equation*}
c_*(A)<\infty\quad\text{or}\quad\widehat{\omega}^A(X)\geqslant1.
\end{equation*}
If moreover equality prevails in the latter inequality, then actually
\[\lambda_{A,f}=\widehat{\omega}^A.\]
\end{theorem}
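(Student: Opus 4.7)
My point of departure is the decomposition identity, valid for every $\mu\in\mathcal E^+_f(A)$:
\[
I_f(\mu)-I_f(\widehat{\omega}^A)=\|\mu-\widehat{\omega}^A\|^2+2\int U^{\widehat{\omega}^A-\omega}\,d\mu\;\geqslant\;\|\mu-\widehat{\omega}^A\|^2,
\]
obtained by expanding $\|\mu-\widehat{\omega}^A\|^2$ and using (\ref{def2}) (which makes $\|\widehat{\omega}^A\|^2=\int U^\omega\,d\widehat{\omega}^A$), the inequality coming from (\ref{def1'}). This already disposes of the ``moreover'' clause: if $\widehat{\omega}^A(X)=1$, then $\widehat{\omega}^A\in\breve{\mathcal E}^+_f(A)$, and the displayed bound forces $\lambda_{A,f}=\widehat{\omega}^A$.

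For the sufficiency of either condition, the strategy is a compact exhaustion. For each $K\in\mathfrak C_A$ with $c(K)>0$, the external field $f=-U^\omega$ is continuous on $K$ (by (d), (f), and $K\cap S(\omega)=\varnothing$), hence the $f$-weighted Gauss minimizer $\lambda_{K,f}$ exists by the classical argument combining vague compactness of $\breve{\mathfrak M}^+(K)$ with the vague lower semicontinuity of $I_f$. The scalar net $\bigl(I_f(\lambda_{K,f})\bigr)_K$ is decreasing and bounded below by $w_f(A)>-\infty$ (see (\ref{ww})); applying the parallelogram identity to $\lambda_{K,f},\lambda_{K',f}\in\breve{\mathcal E}^+_f(K')$ for $K\subset K'$ (as in the proof of Theorem~\ref{th1}) yields
\[
\|\lambda_{K,f}-\lambda_{K',f}\|^2\leqslant2\bigl(I_f(\lambda_{K,f})-I_f(\lambda_{K',f})\bigr),
\]
so $(\lambda_{K,f})$ is strong Cauchy. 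Strong completeness of $\mathcal E^+(A)$ (Section~\ref{sec-intr1}) produces a strong (hence vague) limit $\lambda\in\mathcal E^+(A)$; using the normalized restrictions $\mu|_K/\mu(K)$ of an arbitrary $\mu\in\breve{\mathcal E}^+_f(A)$ as competitors on $K$ verifies $\lim_K I_f(\lambda_{K,f})=w_f(A)=I_f(\lambda)$.

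I expect the principal difficulty to be upgrading $\lambda(X)\leqslant1$ (all that vague lower semicontinuity of total mass gives) to equality. Under $c_*(A)<\infty$, my plan is to combine the capacity--energy inequality $\|\mu\|^2\geqslant\mu(X)^2/c_*(A)$ on $\mathcal E^+(A)$ with the vanishing of $U^\omega$ at the Alexandroff point $\infty_X$ (a consequence of (d)--(f), since $|\omega|(X)<\infty$ by (f) and $|U^\omega(y)|\leqslant|\omega|(X)\sup_{x\in S(\omega)}\kappa(x,y)\to0$ as $y\to\infty_X$) to exclude any escape of mass along $(\lambda_{K,f})$. Under $\widehat{\omega}^A(X)\geqslant1$, I would invoke Theorem~\ref{conv=ps} and Corollary~\ref{H} to get $\widehat{\omega}^K(X)\to\widehat{\omega}^A(X)\geqslant1$, then employ a Frostman-type characterization of $\lambda_{K,f}$ on compact $K$ (the Lagrange constant $F_K$ with $U^{\lambda_{K,f}}-U^\omega=F_K$ on $S(\lambda_{K,f})$ and $\geqslant F_K$ q.e.\ on $K$) to compare $\lambda_{K,f}$ with $\widehat{\omega}^K$ and transfer the mass-$1$ constraint to the strong limit.

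For necessity, suppose both $c_*(A)=\infty$ and $\widehat{\omega}^A(X)<1$, and write $s:=1-\widehat{\omega}^A(X)>0$. Since every compact $K$ has $c(K)<\infty$ (energy principle), the hypothesis $c_*(A)=\infty$ forces $A$ to be non-relatively-compact and allows me to construct $\sigma_n\in\mathcal E^+(A)$ with $\sigma_n(X)=s$, $\|\sigma_n\|\to0$, and supports eventually outside any compact set (rescaling capacitary measures of $K_n\in\mathfrak C_A$ with $c(K_n)\to\infty$ chosen to drift toward $\infty_X$). Setting $\mu_n:=\widehat{\omega}^A+\sigma_n\in\breve{\mathcal E}^+_f(A)$, the Cauchy--Schwarz estimate $|\langle\widehat{\omega}^A,\sigma_n\rangle|\leqslant\|\widehat{\omega}^A\|\,\|\sigma_n\|\to0$ together with the vanishing of $U^\omega$ at $\infty_X$ (from (e), (f)) gives $\int U^{\widehat{\omega}^A-\omega}\,d\sigma_n\to0$, whence $I_f(\mu_n)\to I_f(\widehat{\omega}^A)=\widehat{w}_f(A)$. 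Therefore $w_f(A)=\widehat{w}_f(A)$, but the identity in the first paragraph shows that any minimizer $\mu\in\breve{\mathcal E}^+_f(A)$ would have to equal $\widehat{\omega}^A$ -- impossible since $\mu(X)=1\ne\widehat{\omega}^A(X)$.
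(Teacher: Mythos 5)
Your opening identity and the ``moreover'' clause are correct (the identity is the paper's (\ref{eqpr4''}) combined with (\ref{def1'})--(\ref{def2'})), and your necessity argument under $c_*(A)=\infty$, $\widehat{\omega}^A(X)<1$ is sound: the paper instead shows that the perturbed sequence is minimizing and identifies its strong/vague limit with the extremal measure $\xi$, concluding $\xi(X)<1$ via Corollary~\ref{l-e-m}, whereas you deduce $w_f(A)=\widehat{w}_f(A)$ and then use the strict convexity encoded in your identity; both routes work.

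The sufficiency direction, however, contains two genuine gaps, located exactly where you flag ``the principal difficulty.'' First, for $c_*(A)<\infty$ the capacity--energy inequality $\|\mu\|^2\geqslant\mu(X)^2/c_*(A)$ does not exclude escape of mass: applied to the strong limit $\lambda$ it only gives $\|\lambda\|^2\geqslant\lambda(X)^2/c_*(A)$, which is perfectly consistent with $\lambda(X)<1$. What is actually needed is that $\breve{\mathcal E}^+(A)$ is \emph{strongly closed} when $c_*(A)<\infty$ (the paper's Theorem~\ref{th-str}, delegated to \cite[Theorem~7.1]{Z-Expo}); this is not a one-line consequence of the mass--energy bound. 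Second, and more seriously, the case $c_*(A)=\infty$ with $\widehat{\omega}^A(X)>1$ is not proved. Vague convergence $\widehat{\omega}^K\to\widehat{\omega}^A$ yields only $\widehat{\omega}^A(X)\leqslant\liminf_K\widehat{\omega}^K(X)$, not convergence of total masses, and ``transfer the mass-$1$ constraint to the strong limit'' is precisely the step that fails in general --- indeed it must fail when $\widehat{\omega}^A(X)<1$, so the hypothesis $\widehat{\omega}^A(X)>1$ has to enter in an essential way that your sketch does not exhibit. The paper's mechanism here is different: one first establishes the Frostman-type relations $U_f^\xi\geqslant C_\xi$ n.e.\ on $A$ and $U_f^\xi=C_\xi$ $\xi$-a.e.\ for the extremal measure $\xi$ itself (Lemma~\ref{l-xi}, which in turn needs the continuity of $\mu\mapsto\int U^\omega\,d\mu$ along minimizing nets and the convergence $c_{K,f}\to C_\xi$); one then shows $C_\xi\ne0$, since $C_\xi=0$ would, by Theorem~\ref{th1}(iii$_1$), force $\xi=\widehat{\omega}^A$, contradicting $\xi(X)\leqslant1<\widehat{\omega}^A(X)$; finally, integrating $U_f^\xi=C_\xi$ against $\xi$ gives $C_\xi=C_\xi\,\xi(X)$, whence $\xi(X)=1$. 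This argument is the heart of the theorem and is absent from your proposal.
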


\begin{corollary}\label{cor-qu}
 $\lambda_{A,f}$ does exist whenever $A$ is quasicompact.
\end{corollary}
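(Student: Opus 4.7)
My plan is to reduce the corollary to the first sufficient condition provided by Theorem~\ref{th-solv1}, namely $c_*(A)<\infty$. Once that inequality is established, the existence of $\lambda_{A,f}$ is immediate from the theorem, so all the work lies in deducing finiteness of (inner) capacity from the hypothesis of quasicompactness.

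To do this, recall from the footnote after condition~(c) that $A$ being quasicompact means $A$ can be approximated in outer capacity by compact sets: for every $\varepsilon>0$ there is a compact $K\subset X$ with $c^*(A\triangle K)<\varepsilon$ (or, in Fuglede's equivalent formulation, an increasing sequence $K_n$ of compact subsets of $A$ whose complement in $A$ has outer capacity zero). Fix any such compact $K$ with, say, $c^*(A\setminus K)<1$. Since the energy principle guarantees $c(Q)<\infty$ for every compact $Q\subset X$ (any nonzero $\mu\in\mathcal E^+(Q)$ has finite energy, cf.\ the footnote to Theorem~\ref{th1} on capacitary measures), we have $c(K)<\infty$. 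By the standard countable subadditivity of outer capacity on arbitrary sets (which is built into the definition via Choquet's construction and is a weaker form of the subadditivity used in Lemma~\ref{str}), it follows that
\[
 c_*(A)\leqslant c^*(A)\leqslant c^*(K)+c^*(A\setminus K)=c(K)+c^*(A\setminus K)<\infty.
\]

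Hence the first alternative in Theorem~\ref{th-solv1} is satisfied, and $\lambda_{A,f}$ exists, completing the proof. There is no real obstacle here; the only thing to verify carefully is the quasicompactness--to--finite-capacity implication, and that is a direct consequence of the definition together with the energy principle.
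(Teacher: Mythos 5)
Your proof is correct and follows the same route as the paper: quasicompactness gives approximation in outer capacity by compact sets, which (together with $c(K)<\infty$ for compact $K$, a consequence of the energy principle, and the subadditivity of outer capacity) yields $c_*(A)\leqslant c^*(A)<\infty$, so the first alternative of Theorem~\ref{th-solv1} applies. The paper states this in one line; you have merely supplied the routine details.
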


\begin{proof}
This is obvious from Theorem~\ref{th-solv1} because a quasicompact set, being approximated in outer capacity by compact sets, is of finite outer (hence inner) capacity.
\end{proof}

\begin{corollary}\label{th3-cor2}If $c_*(A)=\infty$ and $\omega=-\omega^-$, then problem {\rm(\ref{G})} is unsolvable.\end{corollary}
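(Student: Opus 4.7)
The plan is to deduce this corollary directly from Theorem~\ref{th-solv1} together with Remark~\ref{rem2}, in the same spirit as the proof of Corollary~\ref{cor-qu} above. Theorem~\ref{th-solv1} asserts that $\lambda_{A,f}$ exists if and only if at least one of the two conditions $c_*(A)<\infty$ or $\widehat{\omega}^A(X)\geqslant1$ is satisfied, so to rule out solvability it suffices to verify that both conditions fail under the hypotheses of the corollary.

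The first condition is excluded by assumption, since $c_*(A)=\infty$ is given. For the second, I would invoke Remark~\ref{rem2}: because $\omega=-\omega^-\ne0$ has trivial positive part, we have $\widehat{w}_f(A)=0=I_f(0)$, and the unique minimizer in $\mathcal E^+_f(A)$ of the Gauss functional is $\widehat{\omega}^A=0$. Consequently $\widehat{\omega}^A(X)=0<1$, so the second condition in Theorem~\ref{th-solv1} fails as well.

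Combining these two observations, neither of the necessary conditions in Theorem~\ref{th-solv1} holds, and therefore problem~(\ref{G}) admits no solution, which is precisely the claim of the corollary. There is essentially no obstacle to overcome: once Theorem~\ref{th-solv1} and Remark~\ref{rem2} are in place, the argument reduces to a two-line verification, entirely parallel to that of Corollary~\ref{cor-qu}.
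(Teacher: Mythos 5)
Your argument is correct and coincides with the paper's own proof: both invoke Remark~\ref{rem2} to get $\widehat{\omega}^A=0$, hence $\widehat{\omega}^A(X)=0<1$, and then conclude unsolvability from the criterion of Theorem~\ref{th-solv1} since $c_*(A)=\infty$ also rules out the first alternative. No gaps.
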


\begin{proof} Since then $\widehat{\omega}^A=0$ (Remark~\ref{rem2}), this follows directly from Theorem~\ref{th-solv1}.
 \end{proof}

\begin{corollary}\label{th-unsolv}If $c_*(A)=\infty$ and $\omega^+(X)<1/h$, $h$ being the constant appearing in the $h$-Ugaheri maximum principle, then problem~{\rm(\ref{G})} is unsolvable.\footnote{Observe that Corollary~\ref{th-unsolv} remains in force when  $\omega^-(X)\in[0,\infty]$ becomes arbitrarily large, which however agrees with our physical intuition.}
\end{corollary}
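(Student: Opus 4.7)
The plan is to chain together two results already in the excerpt: the pointwise bound on the total mass of the inner pseudo-balayage, and the necessary-and-sufficient criterion for solvability of the Gauss variational problem. Specifically, since the hypothesis $c_*(A)=\infty$ rules out the first alternative in Theorem~\ref{th-solv1}, the burden reduces entirely to showing that the second alternative, $\widehat{\omega}^A(X)\geqslant 1$, must fail.

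First I would invoke Corollary~\ref{H}, which is available because the standing hypotheses (a)--(c) are in force throughout Section~\ref{sec-appl}. This yields the uniform mass bound
\[
\widehat{\omega}^A(X)\leqslant h\,\omega^+(X).
\]
Combining this with the assumption $\omega^+(X)<1/h$ gives $\widehat{\omega}^A(X)<1$, so in particular $\widehat{\omega}^A(X)\not\geqslant 1$.

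Next I would apply Theorem~\ref{th-solv1}: the existence of $\lambda_{A,f}$ requires $c_*(A)<\infty$ or $\widehat{\omega}^A(X)\geqslant 1$. Since both alternatives are now ruled out by the hypotheses, the problem (\ref{G}) admits no solution, which is exactly the claim of Corollary~\ref{th-unsolv}.

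There is no real obstacle here — the corollary is essentially a two-line deduction. The only thing worth flagging is that one should briefly verify that Corollary~\ref{H} is genuinely applicable, i.e.\ that the standing assumptions (a)--(c) hold (they are permanent in Section~\ref{sec-appl}, together with (d)--(f)), and that $\omega^+(X)\in[0,\infty)$ so that the inequality $\omega^+(X)<1/h$ makes sense; this is guaranteed by assumption (a).
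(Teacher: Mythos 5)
Your proposal is correct and is exactly the paper's argument: the paper proves this corollary by combining Theorem~\ref{th-solv1} with the mass bound $\widehat{\omega}^A(X)\leqslant h\,\omega^+(X)$ from Corollary~\ref{H}, precisely as you do. No gaps.
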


\begin{proof} This follows by combining Theorem~\ref{th-solv1} with Corollary~\ref{H}.
 \end{proof}

\begin{remark}\label{rem-rem}
It is clear from the assumptions (d)--(f) that both $U^{\omega^+}$ and $U^{\omega^-}$ are bounded and continuous on $\overline{A}$. Furthermore, by virtue of (e) and (f),
\begin{equation}\label{limU}
\lim_{x\to\infty_X}\,U^{\omega^\pm}(x)=0.
\end{equation}
This implies, in particular, that under the permanent requirements (d)--(f), both (a) and (b) are necessarily fulfilled, and hence can be omitted.
\end{remark}

\begin{example}\label{rem:clas1}
All the permanent requirements (a)--(f) do hold, for instance, for the following kernels:
\begin{itemize}
  \item[\checkmark] The $\alpha$-Riesz kernels $\kappa_\alpha$ of arbitrary order $\alpha\in(0,n)$ on $\mathbb R^n$, $n\geqslant2$.
  \item[\checkmark] The $\alpha$-Green kernels, $0<\alpha\leqslant2$, on an open bounded subset of $\mathbb R^n$, $n\geqslant2$.
\end{itemize}
\end{example}

\begin{example}
Consider the $\alpha$-Riesz kernel $\kappa_\alpha$ of order $0<\alpha\leqslant2$, $\alpha<n$ on $\mathbb R^n$, $n\geqslant2$, a closed set $A\subset\mathbb R^n$ of nonzero capacity, and a positive measure $\omega$, $\omega\ne0$, compactly supported in $A^c$.

\begin{corollary}\label{corR}
Then, the following assertions on the solvability hold true.\footnote{Being actually valid under much more general assumptions, see \cite{Z-Rarx} (Theorems~2.1, 2.13(b$_1$), and 2.15), Corollary~\ref{corR} is presented here in order to illustrate Theorem~\ref{th-solv1}. For another illustration of these results, now given in geometrical terms, see also \cite[Example~2.24]{Z-Rarx}.}
\begin{itemize}
\item[{\rm(i$_2$)}] If $c(A)<\infty$, then $\lambda_{A,f}$ does exist for any $\omega$.
\item[{\rm(ii$_2$)}] If $A$ is not $\alpha$-thin at infinity, then\footnote{By Kurokawa and Mizuta \cite[Definition~3.1]{KM}, $A$ is said to be {\it $\alpha$-thin at infinity} if
\begin{equation*}
 \sum_{j\in\mathbb N}\,\frac{c(A_j)}{q^{j(n-\alpha)}}<\infty,
 \end{equation*}
where $q\in(1,\infty)$ and $A_j:=A\cap\{x\in\mathbb R^n:\ q^j\leqslant|x|<q^{j+1}\}$. See also Zorii \cite[Definition~2.1]{Z-bal2} and Doob \cite[pp.~175--176]{Doob}, the latter pertaining to $\alpha=2$.}
\[\lambda_{A,f}\text{\ exists}\iff\omega(\mathbb R^n)\geqslant1.\]
\item[{\rm(iii$_2$)}] Assume that $A$ is $\alpha$-thin at infinity, $c(A)=\infty$, and $\omega(\mathbb R^n)\leqslant1$. If moreover $A^c$ is connected unless $\alpha<2$, then $\lambda_{A,f}$ fails to exist.
\end{itemize}
\end{corollary}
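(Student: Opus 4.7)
\medskip
\noindent\textbf{Proof proposal.} The plan is to reduce everything to the solvability criterion of Theorem~\ref{th-solv1}, by identifying $\widehat{\omega}^A(\mathbb{R}^n)$ with the total mass of the classical inner Riesz balayage and then quoting the standard mass-preservation/loss dichotomy for the $\alpha$-Riesz kernel.

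Since the $\alpha$-Riesz kernel of order $0<\alpha\leqslant 2$, $\alpha<n$, satisfies the domination principle (Example~\ref{rem:clas}(i)) and $\omega$ is positive, Remark~\ref{rem1} identifies $\widehat{\omega}^A$ with the classical inner balayage $\omega^A$; in particular, $\widehat{\omega}^A(\mathbb{R}^n)=\omega^A(\mathbb{R}^n)$. Assertion (i$_2$) follows at once from Theorem~\ref{th-solv1}, since $c_*(A)=c(A)<\infty$.

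For (ii$_2$), I would first observe that ``$A$ not $\alpha$-thin at infinity'' forces $c(A)=\infty$: the divergence of $\sum_{j}c(A_j)/q^{j(n-\alpha)}$, combined with $q^{j(n-\alpha)}\to\infty$, precludes $(c(A_j))_{j\in\mathbb{N}}$ from being bounded, so $c(A)\geqslant\sup_{j} c(A_j)=\infty$. Next, I would invoke the classical mass-preservation property of the inner Riesz balayage (valid for $0<\alpha\leqslant 2$ and positive $\omega$ compactly supported in $A^c$ when $A$ is not $\alpha$-thin at infinity), which gives $\omega^A(\mathbb{R}^n)=\omega(\mathbb{R}^n)$. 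Theorem~\ref{th-solv1} then yields the equivalence: $\lambda_{A,f}$ exists iff $\omega(\mathbb{R}^n)\geqslant 1$.

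For (iii$_2$), the crucial input is the strict mass loss $\omega^A(\mathbb{R}^n)<\omega(\mathbb{R}^n)$, valid whenever $A$ is $\alpha$-thin at infinity and, in the Newtonian case $\alpha=2$, $A^c$ is connected (to prevent $\omega$ from being trapped in a bounded component of $A^c$, where balayage would leave the total mass unchanged). Combined with $\omega(\mathbb{R}^n)\leqslant 1$, this gives $\widehat{\omega}^A(\mathbb{R}^n)<1$; together with $c(A)=\infty$, Theorem~\ref{th-solv1} yields the non-existence of $\lambda_{A,f}$. The main obstacle is precisely this strict mass-loss statement, where the $\alpha$-thinness hypothesis and the connectedness assumption (for $\alpha=2$) enter substantively; however, it is available from the author's previous work~\cite{Z-Rarx} (Theorems~2.1, 2.13(b$_1$), and 2.15), so once these Riesz-specific facts are quoted, the present corollary becomes a clean illustration of Theorem~\ref{th-solv1}.
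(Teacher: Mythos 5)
Your proposal is correct and follows essentially the same route as the paper: identify $\widehat{\omega}^A$ with the classical inner $\alpha$-Riesz balayage $\omega^A$ via the domination principle and Remark~\ref{rem1}, then feed the known mass-preservation (for $A$ not $\alpha$-thin at infinity) and strict mass-loss (for $A$ $\alpha$-thin at infinity, with the connectedness proviso when $\alpha=2$) results into the criterion of Theorem~\ref{th-solv1}; the paper cites \cite[Corollary~5.3]{Z-bal2} and \cite[Theorem~8.7]{Z-bal} for these two Riesz-specific facts. Your explicit justification that non-$\alpha$-thinness forces $c(A)=\infty$ (boundedness of $c(A_j)$ would make the defining series a convergent geometric-type series) is a detail the paper merely asserts, and it is correct.
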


\begin{proof} We begin by observing that for $\kappa$ and $\omega$ in question, $\widehat{\omega}^A$ is actually the $\alpha$-Riesz balayage $\omega^A$ (cf.\ Examples~\ref{rem:clas}(i), \ref{rem:clas1} and Remark~\ref{rem1}), and that (i$_2$) is an immediate corollary to Theorem~\ref{th-solv1}.

Assuming now that $A$ is not $\alpha$-thin at infinity, we get $c(A)=\infty$, and hence, again by Theorem~\ref{th-solv1}, $\lambda_{A,f}$ exists if and only if $\omega^A(\mathbb R^n)\geqslant1$. As $A$ is not $\alpha$-thin at infinity,
$\mu^A(\mathbb R^n)=\mu(\mathbb R^n)$ for every $\mu\in\mathfrak M^+$ (see \cite[Corollary~5.3]{Z-bal2}), whence (ii$_2$).

Under the assumptions of (iii$_2$), $\omega^A(\mathbb R^n)<\omega(\mathbb R^n)\leqslant1$ by \cite[Theorem~8.7]{Z-bal} (cf.\ also \cite[Definition~2.1]{Z-bal2}), which according to Theorem~\ref{th-solv1} completes the proof.
\end{proof}
\end{example}

\section{Proof of Theorem~\ref{th-solv1}}

\subsection{Preparatory results} The following criterion for the existence of the solution $\lambda_{A,f}$ to problem (\ref{G}) was discovered in the author's old work \cite{Z5a}. Nevertheless,  it is still of importance, serving as an important tool in various researches on this topic (see, in particular, the present study as well as \cite{Dr0}, \cite{Z-Rarx}--\cite{Z-Expo}).

\begin{theorem}\label{th-ch2}For $\mu\in\breve{\mathcal E}^+(A)$ to be the {\rm(}unique{\rm)} solution
$\lambda_{A,f}$ to problem {\rm(\ref{G})}, it is necessary and sufficient that
\begin{equation}\label{ch-1}
U_f^\mu\geqslant\int U^\mu_f\,d\mu\quad\text{n.e.\ on $A$},
\end{equation}
or equivalently
\begin{equation}\label{ch-2}
U_f^\mu=\int U^\mu_f\,d\mu\quad\text{$\mu$-a.e.\ on $X$},
\end{equation}
where $U_f^\mu:=U^\mu+f$ is said to be the $f$-weighted potential of $\mu$.\footnote{For $f$ in question and for any $\nu\in\mathcal E^+$, $U_f^\nu$ is finite n.e.\ on $A$, which is obvious in view of the fact that $U^\nu$ is finite q.e.\ (hence, n.e.) on $X$, cf.\ \cite[Corollary to Lemma~3.2.3]{F1}.\label{f1}}
The {\rm(}finite{\rm)} constant
\begin{equation}\label{ch-3}c_{A,f}:=\int U_f^{\lambda_{A,f}}\,d\lambda_{A,f}\end{equation} is referred to as the inner $f$-weighted equilibrium constant for the set $A$.
\end{theorem}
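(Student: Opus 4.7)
The uniqueness of the minimizer in $\breve{\mathcal{E}}^+(A)$ already follows from the standard convexity plus parallelogram argument in the pre-Hilbert space $\mathcal{E}$ (as invoked earlier in the paper for $\widehat{\omega}^A$), so I would focus on the characterization $\mu=\lambda_{A,f}\iff(\ref{ch-1})$, and then observe that (\ref{ch-1}) automatically forces (\ref{ch-2}) through Lemma~\ref{l1}, so that (\ref{ch-1}) and (\ref{ch-2}) furnish equivalent characterizations of the minimizer.

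\emph{Necessity.} Starting from $\mu=\lambda_{A,f}$, I would test against convex combinations. For any $\nu\in\breve{\mathcal{E}}^+_f(A)$ and $t\in(0,1]$, the interpolant $\mu_t:=(1-t)\mu+t\nu$ still lies in the convex set $\breve{\mathcal{E}}^+_f(A)$ (unit mass is preserved), so $I_f(\mu_t)\geqslant I_f(\mu)$. A direct pre-Hilbert expansion yields
\[I_f(\mu_t)-I_f(\mu)=2t\int U_f^\mu\,d(\nu-\mu)+t^2\|\nu-\mu\|^2,\]
and dividing by $t$ and letting $t\downarrow0$ produces the variational inequality $\int U_f^\mu\,d\nu\geqslant c:=\int U_f^\mu\,d\mu$ for every $\nu\in\breve{\mathcal{E}}^+_f(A)$. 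To upgrade this to (\ref{ch-1}), I would argue by contradiction. Assume $c_*(E)>0$ for $E:=\{x\in A:U_f^\mu(x)<c\}$. Under (d)--(f) the potential $U^\omega$ is continuous on $\overline{A}$, so $U_f^\mu=U^\mu-U^\omega$ is l.s.c.\ on $A$, and the stratification $E=\bigcup_n\bigl(A\cap\{U_f^\mu<c-1/n\}\bigr)$ consists of universally measurable sets. By Lemma~\ref{str} at least one stratum has positive inner capacity, so the definition of inner capacity supplies a compact $K$ inside that stratum with $c(K)>0$. Any unit-mass $\tau\in\mathcal{E}^+(K)$ (which exists by (\ref{iff}) and belongs to $\breve{\mathcal{E}}^+_f(A)$ since $f$ is bounded on $K$ by (b)) then satisfies $\int U_f^\mu\,d\tau\leqslant c-1/n<c$, contradicting the variational inequality.

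\emph{Sufficiency.} Conversely, assume $\mu\in\breve{\mathcal{E}}^+(A)$ satisfies (\ref{ch-1}) with $c:=\int U_f^\mu\,d\mu$. Since $\mu\in\mathcal{E}^+(A)$, Lemma~\ref{l1} transfers (\ref{ch-1}) to $U_f^\mu\geqslant c$ $\mu$-a.e., and integration against $\mu$ forces equality, so $U_f^\mu=c$ $\mu$-a.e.; this is precisely (\ref{ch-2}). For any competitor $\nu\in\breve{\mathcal{E}}^+_f(A)\subset\mathcal{E}^+(A)$, Lemma~\ref{l1} likewise yields $U_f^\mu\geqslant c$ $\nu$-a.e. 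The identity
\[I_f(\nu)-I_f(\mu)=\|\nu-\mu\|^2+2\int(U_f^\mu-c)\,d(\nu-\mu),\]
valid because $\nu(X)=\mu(X)=1$, combined with $\int(U_f^\mu-c)\,d\mu=0$ (by definition of $c$) and $\int(U_f^\mu-c)\,d\nu\geqslant0$, shows $I_f(\nu)\geqslant I_f(\mu)$, whence $\mu=\lambda_{A,f}$.

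The step I expect to demand the most care is the localization in the necessity direction: converting the global variational inequality $\int U_f^\mu\,d\nu\geqslant c$ into the pointwise n.e.\ lower bound (\ref{ch-1}) requires finding a compact subset of positive capacity inside the sub-level set $\{U_f^\mu<c\}\cap A$. This rests on two ingredients available in our setting: the l.s.c.\ of $U_f^\mu$ on $A$ (supplied by assumptions (d) and (f), which make $U^\omega$ continuous on $\overline{A}$), and the countable subadditivity of inner capacity on universally measurable sets (Lemma~\ref{str}), so that a stratification by $1/n$-level sets produces a quantitative gap $c-1/n$ on the witnessing compact.
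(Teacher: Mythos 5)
The paper offers no argument of its own for this theorem --- its ``proof'' is a bare citation to \cite{Z5a} (Theorems~1, 2 and Proposition~1) --- so there is no in-paper route to compare yours against; what you have written is a genuine reconstruction. The reconstruction is sound in its two substantive directions. The expansion of $I_f((1-t)\mu+t\nu)$ is legitimate (all integrals involved are finite because $\mu,\nu\in\breve{\mathcal E}^+_f(A)$ and $\langle\mu,\nu\rangle<\infty$); the localization of the variational inequality via the strata $A\cap\{U_f^\mu<c-1/n\}$ and Lemma~\ref{str} is exactly the right tool here (the strata are Borel, hence universally measurable, since $U^\mu$ is l.s.c.\ while $U^\omega$ is continuous on $\overline{A}$ by Remark~\ref{rem-rem}, and any $\tau\in\breve{\mathcal E}^+(K)$ does lie in $\breve{\mathcal E}^+_f(A)$ because $f$ is bounded on $A$); and the sufficiency of (\ref{ch-1}) via the identity $I_f(\nu)-I_f(\mu)=\|\nu-\mu\|^2+2\int(U_f^\mu-c)\,d(\nu-\mu)$ combined with Lemma~\ref{l1} is correct. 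This covers everything the paper actually extracts from Theorem~\ref{th-ch2}: in Lemma~\ref{l-xi} only the necessity of (\ref{ch-1-k}) and (\ref{ch-2-k}) is invoked.

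The one loose end is your closing claim that (\ref{ch-1}) and (\ref{ch-2}) ``furnish equivalent characterizations.'' You prove (\ref{ch-1})$\,\Rightarrow\,$(\ref{ch-2}) but never the converse, i.e.\ that (\ref{ch-2}) alone forces $\mu=\lambda_{A,f}$ --- and that implication is in fact false under the standing hypotheses. Take the Newtonian kernel on $\mathbb R^3$, let $A$ be the union of two disjoint closed balls, let $\omega$ be the normalized uniform measure on a large sphere enclosing $A$ (so $f$ is constant on $A$ and (d)--(f) hold), and let $\mu$ be the normalized capacitary measure of one ball: then $U_f^\mu=\int U_f^\mu\,d\mu$ holds $\mu$-a.e., yet $\mu$ is not the minimizer over $A$, since (\ref{ch-1}) fails on the other ball. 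So the defensible content of the theorem --- and what your argument actually delivers --- is that (\ref{ch-1}) is necessary and sufficient, and that (\ref{ch-2}) is a necessary consequence; the ``or equivalently'' cannot be read as asserting that (\ref{ch-2}) by itself characterizes $\lambda_{A,f}$. State your conclusion in that form rather than claiming a two-way equivalence you have not (and could not have) established.
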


\begin{proof}
 See \cite{Z5a} (Theorems~1, 2 and Proposition~1).
\end{proof}

\begin{definition}\label{def-min}
A net $(\mu_s)\subset\breve{\mathcal E}^+_f(A)$ is said to be {\it minimizing} if
\begin{equation}\label{min}
\lim_{s}\,I_f(\mu_s)=w_f(A);
\end{equation}
let $\mathbb M_f(A)$ stand for the (nonempty) set of all those $(\mu_s)$.\footnote{Since the proof of Theorem~\ref{th-solv1}  follows in general the scheme suggested in \cite{Z-Oh}, some of its parts are quite similar to those in \cite{Z-Oh,Z-Expo}, though  being performed under other assumptions on the external field under consideration. To make the proof of Theorem~\ref{th-solv1} clear and self-contained, we briefly represent some of those parts, correspondingly modified.}
\end{definition}

\begin{lemma}\label{l-extr}
There is the unique $\xi_{A,f}\in\mathcal E^+(A)$ such that for every $(\mu_s)\in\mathbb M_f(A)$,
\begin{equation}\label{e-extr}
\mu_s\to\xi_{A,f}\quad\text{strongly and vaguely in $\mathcal E^+(A)$};
\end{equation}
this $\xi_{A,f}$ is said to be the extremal measure in problem {\rm(\ref{G})}.
\end{lemma}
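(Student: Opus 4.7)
The plan is to establish strong Cauchy-ness of every minimizing net via the parallelogram identity, pass to a strong limit using the strong completeness of $\mathcal E^+(A)$, obtain vague convergence for free from perfectness of $\kappa$, and finally verify that the limit does not depend on the chosen minimizing net.

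First I would observe that $\breve{\mathcal E}^+_f(A)$ is a convex set: given $\mu,\nu\in\breve{\mathcal E}^+_f(A)$, the midpoint $(\mu+\nu)/2$ belongs to $\mathcal E^+(A)$ (as $\mathcal E^+(A)$ is a convex cone), is $f$-integrable (since $\mathcal E^+_f(A)$ is a convex cone, as noted after (\ref{Ef})), and has total mass $1$. Combining the definition (\ref{If}) of $I_f$ with the parallelogram identity in the pre-Hilbert space $\mathcal E$ yields the key inequality
\[
\|\mu-\nu\|^2 \;=\; 2I_f(\mu)+2I_f(\nu)-4I_f\!\bigl((\mu+\nu)/2\bigr)\;\leqslant\;2I_f(\mu)+2I_f(\nu)-4w_f(A),
\]
valid for all $\mu,\nu\in\breve{\mathcal E}^+_f(A)$; here the last step uses $(\mu+\nu)/2\in\breve{\mathcal E}^+_f(A)$ together with the definition (\ref{G}) of $w_f(A)$.

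Now if $(\mu_s)\in\mathbb M_f(A)$, applying the inequality with $\mu=\mu_s$ and $\nu=\mu_{s'}$ and using (\ref{min}) together with the finiteness of $w_f(A)$ (see (\ref{ww})) shows that $(\mu_s)$ is strong Cauchy in $\mathcal E^+(A)$. Since $\mathcal E^+(A)$ is strongly complete (Section~\ref{sec-intr1}), there exists $\xi\in\mathcal E^+(A)$ with $\mu_s\to\xi$ strongly. By perfectness of $\kappa$, strong convergence in $\mathcal E^+$ implies vague convergence to the same limit, so the net also converges to $\xi$ vaguely.

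It remains to prove that this limit does not depend on the particular minimizing net. Given two such nets $(\mu_s)_{s\in S}\in\mathbb M_f(A)$ and $(\nu_t)_{t\in T}\in\mathbb M_f(A)$ with respective strong limits $\xi$ and $\xi'$, apply the same parallelogram inequality across the two nets to obtain, for any $(s,t)\in S\times T$,
\[
\|\mu_s-\nu_t\|^2\;\leqslant\;2I_f(\mu_s)+2I_f(\nu_t)-4w_f(A).
\]
Taking the limit along the product directed set $S\times T$ yields $\mu_s-\nu_t\to 0$ strongly, and hence $\xi=\xi'$ by the Hausdorff property of the strong topology on $\mathcal E$ (the energy principle). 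Define $\xi_{A,f}$ to be this common limit; the lemma follows. The only genuinely delicate point is ensuring $(\mu+\nu)/2\in\breve{\mathcal E}^+_f(A)$ so that the parallelogram estimate can be bounded by $w_f(A)$; once this convexity of $\breve{\mathcal E}^+_f(A)$ is in place, everything else is a routine combination of strong completeness of $\mathcal E^+(A)$ and perfectness of $\kappa$.
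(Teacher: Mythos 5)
Your proposal is correct and takes essentially the same route as the paper: the paper's proof obtains the key estimate $\lim_{(s,t)\in S\times T}\|\mu_s-\nu_t\|=0$ by citing \cite[Proof of Lemma~4.1]{Z-Oh}, and that cited argument is precisely the convexity-of-$\breve{\mathcal E}^+_f(A)$ plus parallelogram-identity computation you carry out explicitly, after which both proofs conclude identically via the strong completeness of $\mathcal E^+(A)$ and the perfectness of $\kappa$.
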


\begin{proof}
In a manner similar to that in \cite[Proof of Lemma~4.1]{Z-Oh}, one can see that for any $(\mu_s)_{s\in S}$ and $(\nu_t)_{t\in T}$
from $\mathbb M_f(A)$,
\begin{equation}\label{ST}
\lim_{(s,t)\in S\times T}\,\|\mu_s-\nu_t\|=0,
\end{equation}
$S\times T$ being the product of the directed sets $S$ and $T$ \cite[p.~68]{K}. Taking the two nets in (\ref{ST}) to be equal, we deduce that every $(\nu_t)_{t\in T}\in\mathbb M_f(A)$ is strong Cauchy. The cone $\mathcal E^+(A)$ being strongly closed, hence strongly complete (Section~\ref{sec-intr1}), $(\nu_t)_{t\in T}$ must converge strongly to some (unique) $\xi_{A,f}\in\mathcal E^+(A)$. The same $\xi_{A,f}$ also serves as the strong limit of any other $(\mu_s)_{s\in S}\in\mathbb M_f(A)$, which is obvious from (\ref{ST}). The strong topology on $\mathcal E^+$ being finer than the vague topology on $\mathcal E^+$ according to the perfectness of the kernel $\kappa$, $(\mu_s)_{s\in S}$ must converge to $\xi_{A,f}$ also vaguely.
\end{proof}

\begin{lemma}\label{l-extr1} For the extremal measure $\xi_{A,f}$, we have
\begin{align}\label{ext1}
  I_f(\xi_{A,f})&=w_f(A),\\
  \xi_{A,f}(X)&\leqslant1,\label{ext2}
\end{align}
whence
\begin{equation*}
\xi_{A,f}\in\mathcal E^+_f(A).
\end{equation*}
\end{lemma}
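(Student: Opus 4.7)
The plan is to pass to the limit in the Gauss functional along an arbitrary minimizing net. I would fix some $(\mu_s)_{s\in S}\in\mathbb M_f(A)$---such nets exist by the finiteness of $w_f(A)$, cf.\ (\ref{ww})---and invoke Lemma~\ref{l-extr} to secure that $\mu_s\to\xi_{A,f}$ both strongly and vaguely in $\mathcal E^+(A)$, with $\mu_s(X)=1$ throughout.

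The inequality (\ref{ext2}) then follows at once from the vague lower semicontinuity of the total-mass functional $\nu\mapsto\nu(X)$ on $\mathfrak M^+$ (see \cite[Section~IV.1, Proposition~4]{B2}): indeed, $\xi_{A,f}(X)\leqslant\liminf_s\mu_s(X)=1$. In particular, $\xi_{A,f}$ is bounded. Combining this with the uniform bound $|f|=U^{|\omega|}\leqslant M_A$ on $A$ (hypothesis (b) together with (\ref{MA})), I get $\int|f|\,d\xi_{A,f}\leqslant M_A\xi_{A,f}(X)<\infty$, whence $f\in\mathcal L^1(\xi_{A,f})$ and therefore $\xi_{A,f}\in\mathcal E^+_f(A)$.

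For the equality (\ref{ext1}), I would handle the two summands of
\[I_f(\mu_s)=\|\mu_s\|^2-2\int U^\omega\,d\mu_s\]
separately. Strong convergence in $\mathcal E$ gives $\|\mu_s\|\to\|\xi_{A,f}\|$, and hence $\|\mu_s\|^2\to\|\xi_{A,f}\|^2$. For the external-field term, I would appeal to Remark~\ref{rem-rem}, where (d)--(f) force $U^{\omega^\pm}$ to be bounded and continuous on $\overline{A}$ and to satisfy (\ref{limU}); thus $U^\omega|_{\overline{A}}$ belongs to $C_0(\overline{A})$. Since the measures $\mu_s$ and $\xi_{A,f}$ are all concentrated on $\overline{A}$ with uniformly bounded masses, the vague convergence $\mu_s\to\xi_{A,f}$ upgrades to convergence of integrals against $C_0(\overline{A})$-functions (either directly, or by extending $U^\omega|_{\overline{A}}$ to some $\widetilde U\in C_0(X)$ via Tietze, using that the measures do not charge $\overline{A}^c$). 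This yields $\int U^\omega\,d\mu_s\to\int U^\omega\,d\xi_{A,f}$, and putting the two limits together gives $I_f(\xi_{A,f})=\lim_s I_f(\mu_s)=w_f(A)$.

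The main obstacle here is the convergence of the external-field integral: vague convergence alone is too weak when $\omega$ may have infinite energy, and strong convergence cannot be paired with $\omega$ via Cauchy--Schwarz for the same reason. The decay/continuity package (d)--(f) is precisely what is needed to certify that $U^\omega$ is well-controlled in the $C_0$-sense on the supports of the relevant measures, which is what bridges the gap.
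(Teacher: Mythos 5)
Your proposal is correct and follows essentially the same route as the paper: (\ref{ext2}) via the vague lower semicontinuity of $\nu\mapsto\nu(X)$, the norm term via strong convergence, and the external-field term via the decay (\ref{limU}) to control the tail (using the uniform mass bounds) together with a Tietze--Urysohn extension of $U^{\omega^\pm}$ from a compact piece of $\overline{A}$ to handle the rest by vague convergence. The only nitpick is that one cannot literally extend $U^\omega|_{\overline{A}}$ to a single $\widetilde U\in C_0(X)$ of compact support; as in your main argument (and the paper's), the extension must be performed on $K\cap\overline{A}$ for each $\varepsilon$, with the extension kept $\leqslant\varepsilon$ off that compact set.
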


\begin{proof}
Fix $(\mu_s)\in\mathbb M_f(A)$; then $\mu_s\to\xi:=\xi_{A,f}$ strongly and vaguely, which gives
\begin{equation}\label{S}\lim_{s}\,\|\mu_s\|=\|\xi\|\end{equation}
as well as (\ref{ext2}), the mapping $\nu\mapsto\nu(X)$ being vaguely l.s.c.\ on $\mathfrak M^+$.
In view of (\ref{If}), (\ref{min}), and (\ref{S}), the remaining claim (\ref{ext1}) is reduced to showing that
\begin{equation}\label{SS}
\lim_{s}\,\int U^\omega\,d\mu_s=\int U^\omega\,d\xi.
\end{equation}

In fact, if $\omega\in\mathcal E$, then (\ref{SS}) is obvious because of
\[0\leqslant\lim_{s}\,\bigl|\langle\omega,\mu_s-\xi\rangle\bigr|\leqslant\|\omega\|\cdot\lim_{s}\,\|\mu_s-\xi\|=0,\]
which is derived from (\ref{e-extr}) and the Cauchy--Schwarz inequality, applied to $\omega$ and $\mu_s-\xi$, elements of the space $\mathcal E$; while otherwise, (\ref{SS}) follows by use of (d)--(f).

Indeed, for any given $\varepsilon>0$, there is a compact set $K\subset X$ such that $U^{\omega^\pm}(x)<\varepsilon$ for all $x\not\in K$ (cf.\ (\ref{limU})), and therefore, on account of (\ref{ext2}),\footnote{In (\ref{SSS}) as well as in (\ref{SSS'}), we have utilized \cite[Section~IV.4]{B2}, see Proposition~2 and Corollary~2 to Theorem~1 therein.\label{foot}}
\begin{equation}\label{SSS}
\biggl|\int U^\omega(x)1_{K^c}(x)\,d(\mu_s-\xi)(x)\biggr|<4\varepsilon\quad\text{for all $s\geqslant s_0$}.
\end{equation}
There is certainly no loss of generality in assuming that $K_0:=K\cap\overline{A}$ is nonempty. Since $A^c$ is $\mu_s+\xi$-negligible, (\ref{SSS}) remains valid with $K^c$ replaced by $K_0^c$.

As $U^{\omega^+}$, resp.\ $U^{\omega^-}$, is continuous on $\overline{A}$, the Tietze-Urysohn extension
theorem \cite[Theorem~0.2.13]{E2} implies that there exist positive $\varphi^+,\varphi^-\in C_0(X)$ such that
\begin{align*}\varphi^\pm(x)&=U^{\omega^\pm}(x)\quad\text{if $x\in K_0$},\\
\varphi^\pm(x)&\leqslant\varepsilon\quad\text{otherwise},
\end{align*}
which indicates, in turn, that for all $s$ large enough,
\begin{equation}\label{SSS'}\biggl|\int U^\omega(x)1_{K_0}(x)\,d(\mu_s-\xi)(x)\biggr|=\biggl|\int \bigl(\varphi-\varphi|_{K^c_0}\bigr)\,d(\mu_s-\xi)\biggr|<5\varepsilon,\end{equation}
where $\varphi:=\varphi^+-\varphi^-\in C_0(X)$. (Here we have used the fact that $\mu_s\to\xi$ vaguely; see also footnote~\ref{foot}.) This combined with (\ref{SSS}) gives (\ref{SS}), whence the lemma.
\end{proof}

\begin{corollary}\label{l-e-m}Problem {\rm(\ref{G})} is solvable if and only if equality prevails in {\rm(\ref{ext2})}, i.e.
\begin{equation}\label{e1}
 \xi_{A,f}(X)=1,
\end{equation}
and in the affirmative case,
\begin{equation}\label{es}
 \xi_{A,f}=\lambda_{A,f}.
\end{equation}
\end{corollary}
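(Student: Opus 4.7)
The plan is to deduce both implications almost immediately from Lemmas~\ref{l-extr} and \ref{l-extr1}; the substantive work has already been done in those two results.

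For the sufficiency, I would assume $\xi_{A,f}(X)=1$. Lemma~\ref{l-extr1} already provides $\xi_{A,f}\in\mathcal E^+_f(A)$ together with the identity $I_f(\xi_{A,f})=w_f(A)$; the additional normalization $\xi_{A,f}(X)=1$ then places $\xi_{A,f}$ inside the class $\breve{\mathcal E}^+_f(A)$, so it attains the infimum in (\ref{G}). By the uniqueness of the solution to (\ref{G}) (noted just after (\ref{ww})), this forces $\lambda_{A,f}=\xi_{A,f}$, which simultaneously proves the existence of $\lambda_{A,f}$ and the identification (\ref{es}).

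For the necessity, I would use a "constant net" trick. Assuming $\lambda_{A,f}$ exists, take the constant net $(\mu_s):=(\lambda_{A,f})_{s\in S}$; since $I_f(\lambda_{A,f})=w_f(A)$, this net trivially belongs to $\mathbb M_f(A)$ in the sense of Definition~\ref{def-min}. Applying Lemma~\ref{l-extr} forces $\lambda_{A,f}\to\xi_{A,f}$ strongly and vaguely in $\mathcal E^+(A)$. But the strong topology on $\mathcal E$ and the vague topology on $\mathfrak M$ are both Hausdorff, and a constant net in a Hausdorff space converges only to its common value; therefore $\xi_{A,f}=\lambda_{A,f}$. In particular $\xi_{A,f}(X)=\lambda_{A,f}(X)=1$, which proves (\ref{e1}) together with (\ref{es}).

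I do not anticipate any real obstacle here. The nontrivial content, namely the existence of a common strong (and vague) limit $\xi_{A,f}\in\mathcal E^+(A)$ for every minimizing net, the identification $I_f(\xi_{A,f})=w_f(A)$, and the mass bound $\xi_{A,f}(X)\leqslant1$, has already been secured in Lemmas~\ref{l-extr} and \ref{l-extr1} (the most delicate point there being the passage to the limit for $\int U^\omega\,d\mu_s$, handled via the Tietze--Urysohn trick under (d)--(f)). The present corollary is the purely formal observation that, once these facts are in hand, solvability of (\ref{G}) is equivalent to saturation of the mass constraint $\xi_{A,f}(X)\leqslant1$.
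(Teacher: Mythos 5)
Your proposal is correct and follows essentially the same route as the paper: the sufficiency uses \eqref{ext1} together with the normalization to place $\xi_{A,f}$ in $\breve{\mathcal E}^+_f(A)$, and the necessity uses the constant (trivial) net $(\lambda_{A,f})\in\mathbb M_f(A)$ plus the Hausdorff property of the strong topology to force $\xi_{A,f}=\lambda_{A,f}$. No gaps.
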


\begin{proof}Indeed, if (\ref{e1}) holds, then $\xi_{A,f}$ belongs to $\breve{\mathcal E}^+_f(A)$, which together with (\ref{ext1}) shows that $\xi_{A,f}$ serves as $\lambda_{A,f}$. Further, if $\lambda_{A,f}$ exists, then the trivial net $(\lambda_{A,f})$ obviously belongs to $\mathbb M_f(A)$, and hence converges strongly to $\xi_{A,f}$ (Lemma~\ref{l-extr}) as well as to $\lambda_{A,f}$. This implies (\ref{es}), the strong topology on $\mathcal E^+(A)$ being Hausdorff.\end{proof}

\begin{lemma}\label{l-xi}
For the extremal measure $\xi:=\xi_{A,f}\in\mathcal E^+_f(A)$, we have
\begin{align}\label{e-xi1}
U_f^\xi&\geqslant C_\xi\quad\text{n.e.\ on $A$},\\
U_f^\xi&=C_\xi\quad\text{$\xi$-a.e.\ on $X$},\label{e-xi2}
\end{align}
where
\begin{equation}\label{cxi}
 C_\xi:=\int U_f^\xi\,d\xi\in(-\infty,\infty).
\end{equation}
\end{lemma}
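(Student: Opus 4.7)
The plan is to derive the variational inequality
\[
\int U_f^\xi\,d\tau \geqslant C_\xi \quad\text{for every } \tau \in \breve{\mathcal E}^+_f(A), \qquad (\star)
\]
from the minimizing-net characterization of $\xi$, and then deduce both conclusions by the familiar route of testing against measures supported on compact subsets of $A$ of positive capacity. To obtain $(\star)$ I would fix $(\mu_s)\in\mathbb{M}_f(A)$; by Lemma~\ref{l-extr}, $\mu_s\to\xi$ strongly in $\mathcal E^+(A)$. For $\tau \in \breve{\mathcal E}^+_f(A)$ and $t \in (0,1]$ the convex combination $(1-t)\mu_s + t\tau$ again lies in $\breve{\mathcal E}^+_f(A)$, so
\[
I_f\bigl((1-t)\mu_s + t\tau\bigr) - I_f(\mu_s) = t^2\|\tau-\mu_s\|^2 + 2t\int U_f^{\mu_s}\,d(\tau-\mu_s) \geqslant w_f(A) - I_f(\mu_s),
\]
whose right-hand side tends to $0$. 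Using the strong convergence of $\mu_s$ together with the auxiliary convergence $\int U^\omega\,d\mu_s \to \int U^\omega\,d\xi$ established inside the proof of Lemma~\ref{l-extr1}, the limit gives $t^2\|\tau-\xi\|^2 + 2t[\int U_f^\xi\,d\tau - C_\xi]\geqslant 0$; dividing by $2t$ and letting $t\to 0^+$ yields $(\star)$.

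For (\ref{e-xi1}), assumption (b) makes $U^\omega$ continuous on every compact subset of $A$, whence $U_f^\xi$ is l.s.c.\ there. If (\ref{e-xi1}) failed, countable subadditivity of inner capacity (Lemma~\ref{str}) would produce a compact $K\subset A$ with $c(K)>0$ on which $U_f^\xi \leqslant C_\xi - \varepsilon$ for some $\varepsilon>0$; testing $(\star)$ against any $\tau\in\breve{\mathcal E}^+(K)\subset\breve{\mathcal E}^+_f(A)$ (existing by (\ref{iff})) would give $\int U_f^\xi\,d\tau \leqslant C_\xi-\varepsilon$, a contradiction. For (\ref{e-xi2}), Lemma~\ref{l1} applied to (\ref{e-xi1}) gives $U_f^\xi \geqslant C_\xi$ $\xi$-a.e., so integrating yields $0\leqslant\int(U_f^\xi - C_\xi)\,d\xi = C_\xi(1-\xi(X))$. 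In the solvable case $\xi(X)=1$ (Corollary~\ref{l-e-m}), the right-hand side vanishes and (\ref{e-xi2}) follows immediately.

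The main obstacle is the subcase $\xi(X)<1$, where Theorem~\ref{th-solv1} forces $c_*(A)=\infty$ and one still needs $C_\xi=0$. By Lemma~\ref{str} combined with $c_*(A)=\infty$ one has $c_*(A\cap L^c)=\infty$ for every compact $L\subset X$, hence along any exhaustion $L_n\uparrow X$ by compacta I select compact sets $K_n\subset A\cap L_n^c$ with $c(K_n)\to\infty$. Taking $\sigma_n:=(1-\xi(X))\mu_n$, where $\mu_n\in\breve{\mathcal E}^+(K_n)$ is chosen with $\|\mu_n\|^2\leqslant 2/c(K_n)\to 0$ (such $\mu_n$ exists by the very definition of inner capacity), and using hypotheses (e)--(f) to conclude that $U^\omega\to 0$ uniformly on $S(\sigma_n)\subset K_n\subset L_n^c$, one obtains $\int U^\omega\,d\sigma_n\to 0$ and, via Cauchy--Schwarz, $\langle\xi,\sigma_n\rangle\leqslant\|\xi\|\,\|\sigma_n\|\to 0$, so $\int U_f^\xi\,d\sigma_n\to 0$. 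Scaling $(\star)$ to its non-probability form $\int U_f^\xi\,d\sigma_n\geqslant C_\xi(1-\xi(X))$ and passing to the limit yields $C_\xi(1-\xi(X))\leqslant 0$; combined with the earlier $\geqslant 0$ direction this forces $C_\xi=0$, and (\ref{e-xi2}) then follows from (\ref{e-xi1}) together with (\ref{cxi}).
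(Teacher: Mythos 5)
Your argument is correct in substance, but it follows a genuinely different route from the paper's. The paper exhausts $A$ by compact sets $K$, invokes the existence of the compact-case minimizers $\lambda_{K,f}$ together with their characterization from \cite{Z5a} (Theorem~\ref{th-ch2}), shows that $(\lambda_{K,f})_{K\in\mathfrak C_A}$ is a minimizing net so that $\lambda_{K,f}\to\xi$ strongly and vaguely, and then passes the two relations to the limit: the inequality (\ref{e-xi1}) via Lemma~\ref{l2} and countable subadditivity, and the equality (\ref{e-xi2}) via the bound $U_f^{\lambda_{K,f}}\leqslant c_{K,f}$ on $S(\lambda_{K,f})$ combined with a vague-convergence-of-supports argument and the lower semicontinuity of $(x,\mu)\mapsto U^\mu(x)$. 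You instead derive the first-order optimality condition $(\star)$ directly at the extremal measure by convex perturbation of a minimizing net (using the strong convergence and the convergence (\ref{SS}) from Lemma~\ref{l-extr1}), obtain (\ref{e-xi1}) by testing against compact pieces, and obtain (\ref{e-xi2}) from the identity $\int\bigl(U_f^\xi-C_\xi\bigr)\,d\xi=C_\xi\bigl(1-\xi(X)\bigr)$ together with a separate proof that $C_\xi=0$ when $\xi(X)<1$, built on far-away low-energy test measures (an argument closely parallel to the paper's own unsolvability construction later in the proof of Theorem~\ref{th-solv1}). Your route dispenses with Theorem~\ref{th-ch2}, Lemma~\ref{l2}, and the support-point argument, at the price of the case analysis on $\xi(X)$; the paper's route keeps everything inside the compact-approximation framework it has already set up and needs no such dichotomy.

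One point must be repaired: in the subcase $\xi(X)<1$ you invoke Theorem~\ref{th-solv1} to conclude $c_*(A)=\infty$, but Lemma~\ref{l-xi} is itself an ingredient of the proof of that theorem, so the citation is circular as written. The implication you actually need follows without circularity from Theorem~\ref{th-str} and Corollary~\ref{l-e-m}: if $c_*(A)<\infty$, then $\breve{\mathcal E}^+(A)$ is strongly closed, so $\xi$, being the strong limit of a minimizing net lying in $\breve{\mathcal E}^+_f(A)\subset\breve{\mathcal E}^+(A)$, satisfies $\xi(X)=1$; neither of these two statements depends on Lemma~\ref{l-xi}. With that substitution, and noting that the sets $\bigl\{x\in A:\ U_f^\xi(x)\leqslant C_\xi-1/n\bigr\}$ are Borel (hence universally measurable, as required by Lemma~\ref{str}), your proof goes through.
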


\begin{proof}
For any $K\in\mathfrak C_A$, $\lambda_{K,f}$ does exist, which is clear from Corollary~\ref{l-e-m} by use of the vague compactness of the class $\breve{\mathfrak M}^+(K)$, see \cite[Section~III.1.9, Corollary~3]{B2}.
We assert that those $\lambda_{K,f}$ form a minimizing net, i.e.
\begin{equation}\label{Kmin}
(\lambda_{K,f})_{K\in\mathfrak C_A}\in\mathbb M_f(A).
\end{equation}

On account of Definition~\ref{def-min}, (\ref{Kmin}) is in fact reduced to
\begin{equation}\label{contwf1}
 \lim_{K\uparrow A}\,w_f(K)=w_f(A).
\end{equation}
Since the net $\bigl(w_f(K)\bigr)_{K\in\mathfrak C_A}$ obviously decreases and has $w_f(A)$ as a lower bound, (\ref{contwf1}) will follow once we show that for any given $\mu\in\breve{\mathcal E}^+_f(A)$,
\begin{equation}\label{contwf1'}
I_f(\mu)\geqslant\lim_{K\uparrow A}\,w_f(K).
\end{equation}
Noting that $\mu(K)\uparrow1$ as $K\uparrow A$, and applying \cite[Lemma~1.2.2]{F1} to each of the
(positive, l.s.c., $\mu$-integrable) functions $\kappa$, $U^{\omega^+}$, and $U^{\omega^-}$, we get
\begin{equation*}
I_f(\mu)=\lim_{K\uparrow A}\,I_f(\mu|_K)=\lim_{K\uparrow A}\,I_f(\nu_K)\geqslant\lim_{K\uparrow A}\,w_f(K),
\end{equation*}
where $\nu_K:=\mu|_K/\mu(K)\in\breve{\mathcal E}^+(K)$ ($K\in\mathfrak C_A$ being large enough). This proves (\ref{contwf1'}), whence (\ref{Kmin}). Therefore, by virtue of Lemma~\ref{l-extr},
\begin{equation}\label{min-ext-c}
\lambda_{K,f}\to\xi_{A,f}\quad\text{strongly and vaguely in $\mathcal E^+(A)$ as $K\uparrow A$}.
\end{equation}

On the other hand, applying (\ref{ch-1})--(\ref{ch-3}) to each $K\in\mathfrak C_A$ gives
 \begin{align}\label{ch-1-k}
U_f^{\lambda_{K,f}}&\geqslant c_{K,f}\quad\text{n.e.\ on $K$},\\
\label{ch-2-k}
U_f^{\lambda_{K,f}}&=c_{K,f}\quad\text{$\lambda_{K,f}$-a.e.,}
\end{align}
where
\begin{equation}\label{ch-3-k}c_{K,f}:=\int U_f^{\lambda_{K,f}}\,d\lambda_{K,f}.\end{equation}
Letting $K\uparrow A$ in (\ref{ch-3-k}), we deduce from (\ref{S}) and (\ref{SS}) that
\begin{equation}\label{lll0}
\lim_{K\uparrow A}\,c_{K,f}=C_\xi,
\end{equation}
$C_\xi$ being introduced by means of (\ref{cxi}).

Fix $K_0\in\mathfrak C_A$. The strong topology on $\mathcal E^+$ being first-countable, one can choose a subsequence
$(\lambda_{K_j,f})_{j\in\mathbb N}$ of the net $(\lambda_{K,f})_{K\in\mathfrak C_A}$ such that
\begin{equation}\label{J0}
 \lambda_{K_j,f}\to\xi\quad\text{strongly (hence vaguely) in $\mathcal E^+(A)$ as $j\to\infty$,}
\end{equation}
cf.\ (\ref{min-ext-c}). There is certainly no loss of generality in assuming that
\[K_0\subset K_j\quad\text{for all $j$,}\]
for if not, we replace $K_j$ by $K_j':=K_j\cup K_0$; then, by the monotonicity of $\bigl(w_f(K)\bigr)$, the sequence
$(\lambda_{K_j',f})_{j\in\mathbb N}$ remains minimizing, and hence also converges strongly to $\xi$.

Due to the arbitrary choice of $K_0\in\mathfrak C_A$, (\ref{e-xi1}) will follow once we show that
\begin{equation}\label{JJ'}U^\xi_f\geqslant C_\xi\quad\text{n.e.\ on $K_0$}.\end{equation}
Passing if necessary to a subsequence and changing notations, we infer from (\ref{J0}), by making use of Lemma~\ref{l2}, that
\begin{equation}\label{JJ0}U^\xi=\lim_{j\to\infty}\,U^{\lambda_{K_j,f}}\quad\text{n.e.\ on $X$}.\end{equation}
Now, applying (\ref{ch-1-k}) to each of $K_j$, $j\in\mathbb N$, and then letting $j\to\infty$, in view of (\ref{lll0}) and (\ref{JJ0}) we obtain (\ref{JJ'}). (Here we have utilized the countable subadditivity of inner capacity on universally measurable sets \cite[Lemma~2.3.5]{F1}.)

Since $f$ is continuous on $\overline{A}$ (Remark~\ref{rem-rem}), $U_f^{\lambda_{K,f}}$, where $K\in\mathfrak C_A$, is l.s.c.\ on $\overline{A}$, which in view of (\ref{ch-2-k}) yields
\begin{equation}\label{2K}
U_f^{\lambda_{K,f}}\leqslant c_{K,f}\quad\text{on $S(\lambda_{K,f})$}.
\end{equation}
Since $(\lambda_{K_j,f})$ converges to $\xi$ vaguely, cf.\ (\ref{J0}), for every $x\in S(\xi)$ there exist a subsequence
$(K_{j_k})$ of the sequence $(K_j)$ and points $x_{j_k}\in S(\lambda_{K_{j_k},f})$, $k\in\mathbb N$, such that $x_{j_k}$ approach $x$ as $k\to\infty$. Thus, according to (\ref{2K}),
\[U_f^{\lambda_{K_{j_k},f}}(x_{j_k})\leqslant c_{K_{j_k},f}\quad\text{for all $k\in\mathbb
N$}.\]
Letting here $k\to\infty$ and utilizing (\ref{lll0}), the continuity of $f$ on $\overline{A}$, and the lower semicontinuity of the mapping $(x,\mu)\mapsto U^\mu(x)$ on
$X\times\mathfrak M^+$, $\mathfrak M^+$ being equipped with the vague topology \cite[Lemma~2.2.1(b)]{F1}, we arrive at the inequality
\[U_f^\xi\leqslant C_\xi\quad\text{on $S(\xi)$},\]
which together with (\ref{e-xi1}) gives
\[U_f^\xi=C_\xi\quad\text{n.e.\ on $S(\xi)\cap A$}.\]
Applying Lemma~\ref{l1} therefore results in (\ref{e-xi2}), $\xi$ being a positive measure of finite energy, concentrated on both $A$ (Lemma~\ref{l-extr}) and $S(\xi)$, and hence on $S(\xi)\cap A$.
\end{proof}

\subsection{Proof of Theorem~\ref{th-solv1}}

Let first $c_*(A)<\infty$. According to Corollary~\ref{l-e-m}, problem (\ref{G}) is solvable whenever the extremal measure $\xi:=\xi_{A,f}$, uniquely determined by means of Lemma~\ref{l-extr}, belongs to the class
$\breve{\mathcal E}^+(A)$. This does indeed hold in view of Theorem~\ref{th-str}, $\xi$ being the strong limit of a minimizing net $(\mu_s)\subset\breve{\mathcal E}^+_f(A)$.

\begin{theorem}\label{th-str}
If $c_*(A)<\infty$, $\breve{\mathcal E}^+(A)$ is strongly closed, hence strongly complete.
\end{theorem}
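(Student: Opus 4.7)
Given a sequence $(\mu_n) \subset \breve{\mathcal E}^+(A)$ strongly convergent in $\mathcal E$ to some $\mu_0$, the aim is to show $\mu_0 \in \breve{\mathcal E}^+(A)$. The closedness of $\mathcal E^+(A)$ in the strong topology (a permanent hypothesis, Section~\ref{sec-intr1}) immediately yields $\mu_0 \in \mathcal E^+(A)$, so the real task is to transfer the normalization $\mu_n(X) = 1$ to the limit, i.e., to prove $\mu_0(X) = 1$. By perfectness of $\kappa$, strong convergence upgrades to vague convergence, and vague lower semicontinuity of the total mass $\nu \mapsto \nu(X)$ on $\mathfrak M^+$ already gives $\mu_0(X) \leq \liminf_n \mu_n(X) = 1$, leaving only the reverse inequality $\mu_0(X) \geq 1$ to be shown.

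The plan is to construct the inner capacitary measure $\gamma \in \mathcal E^+(A)$ of $A$ and use it as a test measure. Since $c_*(A) < \infty$, the net $(\gamma_K)_{K \in \mathfrak C_A}$ of compact capacitary measures satisfies $\|\gamma_K\|^2 = c(K) \leq c_*(A) < \infty$, and the parallelogram-plus-minimization trick used to derive (\ref{fund}) applies verbatim to show that this monotone net is strong Cauchy; by strong completeness of $\mathcal E^+(A)$ there is a strong (hence vague) limit $\gamma \in \mathcal E^+(A)$. Passing to the limit in the equilibrium inequality $U^{\gamma_K} \geq 1$ n.e.\ on $K$ --- via Lemma~\ref{l2} along a subsequence to obtain pointwise convergence of potentials off an inner-capacity-zero set, together with Lemma~\ref{str} for countable subadditivity of inner capacity on universally measurable sets --- gives $U^\gamma \geq 1$ n.e.\ on $A$. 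Then for every $\mu \in \mathcal E^+(A)$, Lemma~\ref{l1} upgrades this to $U^\gamma \geq 1$ $\mu$-a.e., so
\[
\mu(X) \leq \int U^\gamma \, d\mu = \langle \gamma, \mu \rangle.
\]
The pairing $\mu \mapsto \langle \gamma, \mu \rangle$ is strongly continuous on $\mathcal E$ by Cauchy--Schwarz, so $\langle \gamma, \mu_n \rangle \to \langle \gamma, \mu_0 \rangle$, yielding $\langle \gamma, \mu_0 \rangle \geq \limsup_n \mu_n(X) = 1$.

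It remains to convert this pairing bound into $\mu_0(X) \geq 1$. Under Frostman's principle ($h = 1$), the equilibrium property $U^\gamma \leq 1$ $\gamma$-a.e.\ (inherited from the compact case) combined with the Frostman bound $U^\gamma \leq 1$ on all of $X$ and the lower bound $U^\gamma \geq 1$ $\mu_0$-a.e.\ forces $U^\gamma = 1$ $\mu_0$-a.e., whence $\mu_0(X) = \langle \gamma, \mu_0 \rangle \geq 1$, finishing the proof. The hard part will be the general $h$-Ugaheri setting, where one only has $U^\gamma \leq h$ on $X$ and the naive pairing collapses to the weaker bound $\mu_0(X) \geq 1/h$. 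To close this gap, I would establish uniform tightness of $(\mu_n)$ by exploiting the estimate $\mu(A \setminus K) \leq \sqrt{c_*(A \setminus K)} \, \|\mu\|$ for $\mu \in \mathcal E^+(A)$, together with condition~(e) on vanishing of $\kappa$ at infinity and the strong boundedness of $(\mu_n)$ to show, via a careful exhaustion $K \uparrow A$ by compacta, that no mass can escape to infinity along $(\mu_n)$; this uniform tightness combined with vague convergence would promote mass to converge, $\mu_n(X) \to \mu_0(X) = 1$, completing the argument.
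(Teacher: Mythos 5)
Your reduction to proving $\mu_0(X)\geqslant1$, the construction of the inner equilibrium measure $\gamma$ of $A$ from the net $(\gamma_K)_{K\in\mathfrak C_A}$, and the bound $\langle\gamma,\mu_0\rangle\geqslant1$ are all sound, and under Frostman's maximum principle your argument does close. But the theorem must hold for every kernel admitted by the standing hypotheses~--- in particular for the $\alpha$-Riesz kernels with $2<\alpha<n$, which satisfy only $h$-Ugaheri's principle with $h=2^{n-\alpha}>1$ (Examples~\ref{rem:clas}(v), \ref{rem:clas1})~--- and for that case you yourself concede that the pairing argument yields only $\mu_0(X)\geqslant1/h$. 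The repair you sketch is not carried out, and its key ingredient does not follow from $c_*(A)<\infty$: the bound $\mu_n(A\setminus K)\leqslant\sqrt{c_*(A\setminus K)}\,\|\mu_n\|$ is useful only if $c_*(A\setminus K)\to0$ as $K\uparrow X$, but inner capacity has no such continuity from above~--- countable subadditivity (Lemma~\ref{str}) gives a \emph{lower} bound for $c_*(A\setminus K)$, not an upper one~--- and extracting that smallness genuinely requires condition~(e) together with the maximum principle and a separate quantitative argument you do not supply. So the general case is left open.

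The gap is avoidable because you squeeze the wrong factor in your own inequality: keep the capacity fixed and make the norm small. For every $\nu\in\mathcal E^+(A)$ one has $\nu(X)\leqslant\int U^{\gamma}\,d\nu\leqslant\sqrt{c_*(A)}\,\|\nu\|$ (your step, via Lemma~\ref{l1} and Cauchy--Schwarz). Apply this to the tails $\nu_n:=\mu_n|_{A\cap U^c}$, $U$ open and relatively compact. Since $\kappa\geqslant0$, $\|\mu_n\|^2\geqslant\|\mu_n|_U\|^2+\|\mu_n|_{U^c}\|^2$; since $\mu_n\to\mu_0$ vaguely, hence vaguely on the open subspace $U$, the vague lower semicontinuity of the energy gives $\liminf_n\|\mu_n|_U\|^2\geqslant\|\mu_0|_U\|^2$; and $\|\mu_n\|^2\to\|\mu_0\|^2$ by strong convergence, while $\|\mu_0|_U\|^2\uparrow\|\mu_0\|^2$ as $U\uparrow X$ by monotone convergence. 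Hence $\limsup_n\|\mu_n|_{U^c}\|^2\leqslant\|\mu_0\|^2-\|\mu_0|_U\|^2$, which can be made arbitrarily small, so $\limsup_n\mu_n(U^c)\leqslant\sqrt{c_*(A)}\,\limsup_n\|\mu_n|_{U^c}\|$ is uniformly small for $U$ large; testing against $\varphi\in C_0(X)$ with $1_{\overline U}\leqslant\varphi\leqslant1$ then gives $1\leqslant\mu_0(X)+\sqrt{c_*(A)}\,\limsup_n\|\mu_n|_{U^c}\|$, whence $\mu_0(X)=1$. This uses neither a maximum principle nor condition~(e), which is what the statement (and the cited \cite[Theorem~7.1]{Z-Expo}, proved in a setting where (c) and (e) are not assumed) requires.
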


\begin{proof} This follows in the same manner as \cite[Theorem~7.1]{Z-Expo}, pertaining to quasiclosed $A$.
\end{proof}

Assume now that $\widehat{\omega}^A(X)=1$. As $\widehat{\omega}^A\in\mathcal E^+_f(A)$ according to Theorem~\ref{th1}, then actually $\widehat{\omega}^A\in\breve{\mathcal E}^+_f(A)$, whence $I_f(\widehat{\omega}^A)\geqslant w_f(A)$. Also note from (\ref{e-i}) and (\ref{ww}) that
\[I_f(\widehat{\omega}^A)=\widehat{w}_f(A)\leqslant w_f(A).\]
Putting this all together implies that the solution $\lambda_{A,f}$ to problem (\ref{G}) does indeed exist, and moreover $\lambda_{A,f}=\widehat{\omega}^A$.

We next aim to show that problem (\ref{G}) is unsolvable provided that
\begin{equation}\label{sm}
c_*(A)=\infty\quad\text{and}\quad\widehat{\omega}^A(X)<1.
\end{equation}
The space $X$ being $\sigma$-compact, whereas $c_*(A)=\infty$, there exist a sequence $(U_j)$ of relatively compact open subsets of $X$ which cover $X$, such that $\overline{U_j}\subset U_{j+1}$ for each $j$  \cite[Section~I.9, Proposition~15]{B1}, and a sequence $(K_j)$ of compact sets
$K_j\subset A\cap U_{j+1}$ such that $K_j\cap\overline{U_j}=\varnothing$ and $c(K_j)\geqslant j$ for each $j$. If
$\lambda_j:=\gamma_{K_j}/c(K_j)\in\breve{\mathcal E}^+(K_j)$ denotes the normalized capacitary measure on $K_j$, then
\begin{align}\label{31}
\|\lambda_j\|&\to0\quad\text{as $j\to\infty$},\\
\label{32}\lambda_j&\to0\quad\text{vaguely in $\mathcal E^+$ as $j\to\infty$},
\end{align}
where the latter is obvious from the fact that for any compact $Q\subset X$, $S(\lambda_j)\cap Q=\varnothing$ for all $j$
large enough. Noting that $\widehat{\omega}^A(X)<1$, cf.\ (\ref{sm}), define
\begin{equation}\label{nnu}
\mu_j:=\widehat{\omega}^A+q\lambda_j,\quad\text{where\ $q:=1-\widehat{\omega}^A(X)\in(0,1]$}.
\end{equation}
As $\widehat{\omega}^A,\lambda_j\in\mathcal E^+(A)$, we have $\mu_j\in\breve{\mathcal E}^+(A)$; hence, $\mu_j\in\breve{\mathcal E}^+_f(A)$ for all $j$, and so
\begin{equation}\label{nnu'}w_f(A)\leqslant\liminf_{j\to\infty}\,I_f(\mu_j).\end{equation}
On the other hand, (\ref{nnu}) implies by means of a straightforward verification that
\[I_f(\mu_j)\leqslant\widehat{w}_f(A)+q^2\|\lambda_j\|^2+2q\bigl\langle\widehat{\omega}^A,\lambda_j\bigr\rangle
+2q\int U^{\omega^-}\,d\lambda_j,\]
and applying (\ref{ww}), (\ref{limU}), and (\ref{31}) therefore gives, by use of the Cauchy--Schwarz inequality,
\begin{equation*}\limsup_{j\to\infty}\,I_f(\mu_j)\leqslant w_f(A).\end{equation*}
Combined with (\ref{nnu'}), this shows that the sequence $(\mu_j)$ is, actually, minimizing, and hence converges strongly and vaguely to the extremal measure $\xi$ (Lemma~\ref{l-extr}). On account of (\ref{32}) and (\ref{nnu}), this yields
$\xi=\widehat{\omega}^A$, whence $\xi(X)<1$ (cf.\ (\ref{sm})), which according to Corollary~\ref{l-e-m} proves the unsolvability of problem (\ref{G}).

It remains to verify the solvability of problem (\ref{G}) in the case where
\begin{equation}\label{Larger}
\widehat{\omega}^A(X)>1.
\end{equation}
We first show that
\begin{equation}\label{ne0}
C_\xi\ne0,
\end{equation}
where $C_\xi$ is given by (\ref{cxi}). Indeed, assuming to the contrary that $C_\xi=0$, we infer from Lemma~\ref{l-xi} that
\begin{align*}
U^\xi&\geqslant U^\omega\quad\text{n.e.\ on $A$},\\
U^\xi&=U^\omega\quad\text{$\xi$-a.e.\ on $X$}.
\end{align*}
According to Theorem~\ref{th1}(iii$_1$), then necessarily
\[\xi=\widehat{\omega}^A,\]
which is however impossible, since $\xi(X)\leqslant1$ while $\widehat{\omega}^A(X)>1$ by virtue of (\ref{ext2}) and (\ref{Larger}), respectively. The contradiction thus obtained proves (\ref{ne0}).

Now, integrating (\ref{e-xi2}) with respect to $\xi$, we get
\[\int U_f^\xi\,d\xi=C_\xi\cdot\xi(X),\]
which in view of (\ref{cxi}) and (\ref{ne0}) gives $\xi(X)=1$. According to Corollary~\ref{l-e-m}, this implies the solvability of problem (\ref{G}), thereby completing the proof of the theorem.

\section{Acknowledgements} This research was supported in part by a grant from the Simons Foundation, the USA (1030291, N.V.Z.).

\section{A data availability statement} This manuscript has no associated data.

\section{Funding and Competing interests} The author has no relevant financial or non-financial interests to disclose.

\end{document}